\documentclass[12pt,BCOR5mm]{amsart}

\usepackage{fontspec}

%
\usepackage{stackrel}
\usepackage[mathscr]{euscript}
\let\euscr\mathscr \let\mathscr\relax
\usepackage{array}
\usepackage[scr]{rsfso}
\usepackage{dashrule}
\usepackage{ytableau}
\usepackage{young}
\usepackage{youngtab}

\usepackage{rotating}

\usepackage{stmaryrd}
\usepackage{amssymb}
\usepackage{amsmath}
\usepackage{titletoc}
\usepackage{float}

\usepackage{extarrows}
\usepackage{varioref}
\usepackage{bm}
\usepackage{amsthm}
\pagestyle{plain}

\usepackage[all,cmtip]{xy}
\usepackage{tikz-cd}
\usepackage{color} 
\usepackage{soul}
\setulcolor{red}
\sethlcolor{yellow}
\setstcolor{blue}
\usepackage{amscd}
\usepackage{xfrac}    

\usepackage{amsfonts}
\usepackage[colorlinks,linkcolor=blue,citecolor=blue, pdfstartview=FitH]{hyperref}
\usepackage{graphicx}

\usepackage{tcolorbox}
\usepackage{colortbl}
\tcbuselibrary{breakable}
\usepackage{cite}



\usepackage{geometry}
\geometry{a4paper,left=2.3cm,right=2.3cm,top=2cm,bottom=2cm}

\numberwithin{equation}{section}

\theoremstyle{definition}
\newtheorem{thm}{Theorem}[section]
\newtheorem{theorem}[thm]{Theorem}

\newtheorem{lemma}[thm]{Lemma}
\newtheorem{corollary}[thm]{Corollary}
\newtheorem{proposition}[thm]{Proposition}

%

\newtheorem{remark}[thm]{Remark}

\newtheorem{definition}[thm]{Definition}

\newtheorem{defn-thm}[thm]{Definition-Theorem}

\title{Frobenius induced morphisms on moduli of sheaves on curves}
\author{Jin Cao}
\address{Jin Cao: School of Mathematics and Physics, University of Science and Technology Beijing, Beijing 100083, P. R. China.}
\email{caojin@ustb.edu.cn}
\author{Xiaoyu Su}
\address{Xiaoyu Su: School of Mathematical Sciences, Beijing University of Posts and Telecommunications, Beijing 100876, P. R. China;
Key Laboratory of Mathematics and Information Networks (Beijing University of Posts and Telecommunications), Ministry of Education, China.}
\email{xiaoyusu@bupt.edu.cn}


			

\begin{document}
	\maketitle
    \begin{abstract} In this paper, we show that the Frobenius pullback of a general semi-stable vector bundle in the moduli space of vector bundles with fixed rank and degree is still semi-stable by deformation trick. We then present several applications of the main theorem.  \\

    Keyword: Frobenius morphism, semistable vector bundle, moduli space, stratification
    \end{abstract}

	\section{Introduction}

The Frobenius morphism provides deep insights into the geometry of moduli spaces—such as those of vector bundles and Higgs bundles—on curves. A central and well-studied problem is the following:

    \textbf{
What is the action of the Frobenius pushforward $\mathrm {fr}_*$
and pullback $\mathrm {fr}^*$ on the corresponding moduli spaces? More specifically, one may ask: how is (semi)stability affected by iteration of the Frobenius morphism?
    }
	
	To address these problems, we prove the following theorem.
	\begin{theorem}[Theorem \ref{Thm:main1}]  Let $X$ be a smooth projective curve of genus $g\geq 2$ over an algebraically closed field $k$ of characteristic $p > 0$, and $\mathrm {fr} : X \to X^{(1)}$ the relative Frobenius morphism. 
		\begin{itemize}
			\item[(a)] the Frobenius pullback $\mathrm {fr}^*E$ of a general semi-stable vector bundle $E$ with rank $r$ and degree $d$ in $\mathrm {Bun}^\text{ss}_{(r,d)}$ is still semi-stable.  
			\item[(b)] the set-theoretic map \[\phi^\mathrm {Bun}_{(r,d)}:\mathrm {Bun}_{X^{(1)},(r,d)}^\text{ss} \dashrightarrow \mathrm {Bun}^\text{ss}_{X,(r,pd)},\ E\mapsto \mathrm {fr}^*E\] inducecd by Frobenius pullback is a dominant rational map on the moduli spaces.
		\end{itemize}
	\end{theorem}

	The study of semistability under pushforward and pullback by finite morphisms is a rich topic in both moduli theory and vector bundle theory.  For finite separable maps between normal varieties, it is well known (cf. \cite[Lemma 1]{Gieseker79} and \cite[Lemma 3.2.2]{HL10}) that, if $f : V\to W$ is a finite separable map between normal varieties, then a torsion free sheaf $F\in \mathrm {Coh}(W)$ is slope $H$-semistable if and only if $f^*F$ is slope $f^*H$-semistable. In the positive characteristic case, we have the relative Frobenius map $\mathrm {fr}:X\to X^{(1)}$ which is purely inseparable and it is interesting to investigate the semistability of $\mathrm {fr}_*E$ and $\mathrm {fr}^*E$ for a semistable vector bundle $E$ on $X$. More precisely, for the Frobenius pushforward, the stability of $\mathrm {fr}_*L$ with $L$ being a line bundle on $X$ was first proven by H. Lange and C. Pauly in \cite[Proposition 1.2]{LangePauly08}).  In \cite{MP07}, V. Mehta and C. Pauly showed that for a curve $X$ of genus $g \geq 2$, if $E$ is semistable, then $\mathrm {fr}_*E$ is also semistable by the covering trick together with G. Faltings's semi-stability criterion (cf. \cite[Theorem I.2]{Fal93} and \cite[Lemme 2.1,  Th\'eor\`eme 2.4 and Lemme 2.5]{LeP96}). Using a clever direct computation, X. Sun \cite[Theorem 2.2]{Sun08} showed that if $E$ is semistable, then $\mathrm{fr}_*E$ is also semistable. Moreover, he also considered the higher-dimensional case and gave a criterion of the instability of the Frobenius direct image sheaf (cf. \cite[Theorem 4.8]{Sun08}). 
    
    For the Frobenius pullback, D. Gieseker in \cite{Gieseker73} showed that  for each prime $p$ and  any integer $g \geq 2$, there is a curve $X$ of genus $g$ defined over a field of characteristic $p$ and a semi-stable bundle $E$ of rank two on $X$ so that $\mathrm {fr}^*E$ is not semi-stable (see also \cite{LangePauly08,Os08}). Moreover, if we assume the degree of the bundle is zero, V. B. Mehta and S. Subramanian in \cite{MS95} proved that for any ordinary curve, the Frobenius pullback induces a dominate rational map $\phi^\mathrm {Bun}:\mathrm {Bun}_{(r,0)}^\text{ss} \dashrightarrow \mathrm {Bun}^\text{ss}_{(r,0)}$. Based on an unpublished work of J. de Jong (cf. \cite[Appendix A, Theorem 6]{Os06}) , B. Olsserman, J. de Jong and C. Pauly dropped the assumption of ordinarity and  showed that the relative Frobenius map $\mathrm {fr}:X\to X^{(1)}$ induces (by the pull-back) a rational map between the moduli space of bundles with degree zero over a curve of genus $g \geq  2$. 
\begin{remark}
In the first version of this work, we mistakenly claimed that there are only finitely many isomorphism classes of bundles whose Frobenius pullback yields an isomorphic bundle. The editor kindly pointed out several counterexamples to this statement—for instance, in \cite[Proposition 4.1]{Pauly07}, Pauly provided an explicit example of such a family.  
\end{remark}

The rough idea of the proof of our theorem is as follows:
\begin{itemize}
\item 
Given any unstable vector bundle $E$ of rank $r$ and degree $d$, there is a deformation from a general bundle to $E$ in $\mathcal Bun_{(X,r)}$, i.e., we may choose a smooth curve $T$ together with a map $T \to \mathcal Bun_{(X,r)}$, such that the corresponding flat family of bundles $\mathcal E$ satisfies $\mathcal E|_{\{t_0\}}\cong E$ and for $t\in T-\{t_0\}$, $\mathcal E|_t$ can be chosen general.
\item
Next we show that the Frobenius pullback bundle $\mathrm {fr}^*E$ is semi-stable for general $E$. The proof proceeds by contradiction. Assume that  $\mathrm {fr}^*E$ is unstable for general $E\in \mathcal Bun_{\mathrm {SL}_r,d_\ell}^\text{ss}$. Then after iterative Frobenius pullback, we show that $\mathrm{fr}^{(n)*} E^{\mathrm{univ}}$ splits as a direct sum of proper sub bundles of a fixed type. However, this imposes a restriction on the possible deformations from a general bundle $\mathrm{fr}^{(n)*} E$ to unstable ones, which would then produce a contradiction.
\item 
The final step is to demonstrate the stability of $\mathrm {fr}^*E$ for general $E$ and show the main theorem. 
\end{itemize}


The structure of the paper is as follows. Section 2 introduces the preliminary notions and conventions used throughout. The detailed proofs of the aforementioned claims are provided in Section 3. In Section 4, we explore various applications of our main result; among these, we offer a novel proof that the Frobenius morphism preserves semistability in the case of vector bundles on curves.

\section{Preliminaries}

One of the most striking differences between algebraic geometry in characteristic zero and positive characteristic is the existence of the Frobenius morphism. This map is a fundamental tool in positive characteristic geometry and number theory, but without any direct analogue in characteristic zero. Let $X$ be a variety defined over an algebraically closed field $k$ with characteristic $p > 0$. The absolute Frobenius morphism $ \mathrm {Frob}_X: X \to X $ is the identity map on the underlying topological spaces and on the structure sheaf defined by raising functions to the $p$-th power: $\mathrm {Frob}_X^\flat: \mathcal{O}_X \to \mathrm{id}_{X*} \mathcal{O}_X, \quad f \mapsto f^p$. This is a $\mathbb F_p$-linear endomorphism of $X$ (since every element in $\mathbb F_p$ is fixed by taking $p$-th power), but it is not a $ k $-morphism because it permutes the $p^n$-th roots of unity for $n\geq 2$. To obtain a morphism over $k$, we decompose $\mathrm {Frob}_X$ with the base change induced by the Frobenius on $k$ and obtain the commutative diagram of Frobenii:
	$$
	\begin{tikzcd}
		X \arrow[rdd, bend right] \arrow[rrd, bend left,"{\mathrm {Frob}_X}"] \arrow[rd,"{\mathrm {Frob}_{X/k}}"] &                                  &             \\
		& X^{(1)} \arrow[d] \arrow[r,"{(\mathrm {Frob}_{k})_X}"] \arrow[rd,phantom, "\square"] & X \arrow[d] \\
		& \mathrm {Spec}(k) \arrow[r,"{\mathrm {Frob}_k}"']                      & \mathrm {Spec}(k)       
	\end{tikzcd}$$
	In the diagram above, $\mathrm {Frob}_{X}$ denotes the absolute Frobenius map and $\mathrm {Frob}_{X/k}: X\to X^{(1)}$ denotes the relative Frobenius map.  We will denote the relative Frobenius $\mathrm {Frob}_{X/k}$ by $\mathrm {fr}$ shortly.

    In the rest of this paper, we let $X$ be a smooth projective curve of genus $g \geq 2$ over an algebraic closed field $k$ of characteristic $p>0$ and we will consider various types of moduli spaces over $X$ such as sheaves (especially vector bundles), Higgs bundles, flat connections and so on. For those moduli spaces, we denote the corresponding moduli stack by $\mathcal  Bun_{X,(r,d)}$, ${(\mathcal  M_\text{Dol})}_{X,(r,d)}$ and ${(\mathcal  M_\text{dR})}_{X,(r,d)}$ respectively, where $r$ is the rank of the underlying bundles and $d$ is the degree of the bundles. If the corresponding data $(X,r,d)$ is clear, we will omit it and just denote them by  $\mathcal  Bun$, ${\mathcal  M_\text{Dol}}$ and $\mathcal  M_\text{dR}$. We use the superscript $(\_)^\text{s}$ (resp. $(\_)^\text{ss}$) to denote the open locus of stable (resp. semistable) objects. We denote by $\mathrm {Bun}$, $M_\text{Dol}$ and   $M_\text{dR}$ the corresponding moduli spaces corepresent the functors of semistable objects up to S-equivalences (cf. \cite[Theorem 1.1]{Lan14}) and denote their open subsets of stable objects by $\mathrm {Bun}^s$, $M_\text{Dol}^\text{s}$ and $M_\text{dR}^\text{s}$.

	Let $E$ be a torsion free coherent sheaf on $X$, a $k$-connection $\nabla$ is a $k$-linear map $\nabla:E\to E\otimes\Omega_X^1$ satisfies the Leibniz rule 
	$$\nabla(fe)=e\otimes df+f\nabla e,$$
	where $f$ is a local section of $\mathcal O_X$ and $e$ a local section of $E$. By evaluating the differentials with tangent vector $D$ by $\nabla_D(-)=\langle\nabla(-),D\rangle$,  a $k$ connection  is equivalent to a map of $\mathcal O_X$ modules $$\euscr T_X\to \euscr E nd_k(E),\ D\mapsto \nabla_D$$ and we still denote this map by $\nabla$. Moreover, if $\nabla\circ \nabla=0$, then we call it a flat connection or an integrable connection. On a curve, all connections are flat. If $\nabla$ is flat then $D\mapsto \nabla_D$ from the tangent sheaf $\euscr T_X$ to $\euscr End(E)$ extends to the universal enveloping algebra $\mathtt U(\euscr T_X)=\euscr D_X$ as a map of $\mathcal O_X$-algebra $\euscr D_X\to \euscr End(E)$. The local sections of the universal enveloping algebra $\euscr D_X$ are called the crystalline differential operators.  Since the characteristic of $k$ is $p>0$, then $\nabla_{D}( - )$ is $p$-linear. Hence it is equivalent to a map 
	$$\euscr T_{X/k}\to \euscr E nd_{\mathrm {fr}^{-1}\mathcal O_{X^{(1)}}}(E).$$
	Then one can define its $p$-curvature map (which is a map of sheaves) as:
	$$\psi_\nabla: \euscr T_{X/k}\to \euscr E nd_{\mathcal O_{X}}(E),\ D\mapsto [(\nabla_{D})^p-\nabla_{D^p}].$$
	Here, one can check that $\psi_\nabla(D):E\to E$ is $\mathcal O_X$-linear for any local section $D$ of the tangent sheaf $\euscr T_{X/k}$. In fact, the $p$-curvature map $\psi_\nabla$ is $p$-linear. i.e., it is additive, and $\psi_{\nabla}(gD)=g^p\psi_{\nabla}(D)$ for any local section $g$ of $\mathcal O_X$. This is not straightforward to check and one can find a proof in \cite[Proposition (5.2)]{Ka70} or a different proof in \cite[1.3, 2.1.2 Remark]{BMR08}. 
	
	According to the $p$-linear property, the sheaf map $\psi_\nabla$ can be viewed as a map of $\mathcal O_{X^{(1)}}$ modules $\euscr T_{X^{(1)}/k}\to \mathrm {fr}_*\euscr End_{\mathcal O_X}(E)$ which adjoint to a morphism $\mathrm {fr}^*\euscr T_{X^{(1)}/k}\to\euscr End_{\mathcal O_X}(E)$ of $\mathcal O_{X}$ modules. 
	In what follows, we use $\psi_\nabla$ to denote the $\mathcal O_X$ linear map  $\mathrm {fr}^*\euscr T_{X^{(1)}/k}\to\euscr End_{\mathcal O_X}(E)$. Moreover, one can also check that for any local sections $D$, $D'$ of $\euscr T_{X^{(1)}/k}$, $\psi_\nabla(D)$ and $\psi_\nabla(D')$ commute as elements in $\euscr End_{\mathcal O_X}(E)$. Thus, if the tangent sheaf is reflexive, we can regard the $p$-curvature as a $\mathrm {fr}^*\Omega_{X^{(1)}}^1$ valued Higgs field on $E$. In \cite[Theorem 1.1]{Lan14}, A. Langer constructed the moduli space $M_\text{dR}^\text{ss}$ which parametrizes the S-equivalent classes of semi-stable flat connection $(E,\nabla)$ with $\mathrm{rank}(E)=r$ and $\deg(E)=d$.

	\section{Stablity of Frobenius pushforward or pullback of bundles}

	Let us first consider the Frobenius pull back map on the level of stacks. For a $k$ scheme $T$, we denote the base change of the relative Frobenius map $\mathrm {fr}\times \mathrm {id}_T$ by $\mathrm {fr}_T$ and we have a functor
	$$ {\phi}^{\mathcal Bun}_{T}:\mathcal Bun_{(X^{(1)},r)}(T)\to {\mathcal Bun}_{(X,r)}(T),\quad E/X^{(1)}\times T\mapsto \mathrm {fr}_T^*E,$$ which is compatible with the pull back morphisms. Thus we get a morphism of algebraic stacks $\phi^{\mathcal Bun}:\mathcal Bun_{(X^{(1)},r)}\to \mathcal Bun_{(X,r)}$ and we will abbreviate it by $\phi$. The notation follows form \cite{Ne05}.

	Frobenius pullback bundles possess a natural flat connection. For any $k$ scheme $T$, if $E$ is a coherent sheaf on $X^{(1)}\times_kT$ then one can construct a canonical connection $\nabla^{\text {can }}=\mathrm d_{X\times T/T}\otimes_{\mathcal O_{X^{(1)}\times T}} \mathrm {id}_E$ on $\mathrm {fr}_T^* E$. One can check that the $p$-curvature of a canonical connection is always zero. Indeed, we have the following Cartier descent theorem:
		\begin{proposition}[Theorem (5.1) \cite{Ka70}] Let $X/k$ be a smooth curve of characteristic $p > 0$. For any $k$ scheme $T$, denote by $\mathrm{fr}_{T}: X\times {T} \rightarrow X^{(1)}\times{T}$ the relative Frobenius $\mathrm {Frob}_{X\times T/T}$. Then there is an equivalence of categories between the category of the flat family of vector bundles on $X^{(1)}\times T$ and the full subcategory of relative flat connections consisting of objects $( F, \nabla )$ whose $p$-curvature is zero. 
		\end{proposition}
        \begin{remark}
        This equivalence may be given explicitly as follows: Let $E$ be a quasi coherent sheaf on $X^{(1)}\times T$. Then there is a unique integrable $T$-connection $\nabla^{\text {can }}$ with zero $p$-curvature on $\mathrm{fr}_T^*(E)$, such that
		$E \xrightarrow{\cong}\mathrm{Ker}(\nabla^{\text{can}}).$
		The desired functor is $E \mapsto\left(\mathrm{fr}_T^*(E), \nabla^{\text {can }}\right)$.
		Consider a relative flat connection $(V, \nabla)$ on the family $X\times T / {T}$. Assuming its $p$-curvature is zero, we form ${V}^{\nabla}=\mathrm{Ker}(\nabla)$, which is in a natural way a coherent sheaf on $X^{(1)}\times T$. The desired inverse functor is given by $({V}, \nabla)\mapsto \mathrm {Ker}(\nabla).$
        \end{remark}
		Moreover, A. Langer in \cite[Proposition 2.2]{Lan04positive} showed that the previous functor is compatible with semi-stability conditions. That is, a coherent sheaf $E$ on $X^{(1)}\times T$ is semistable if and only if the sheaf $\mathrm {fr}_T^* E$ is $\nabla^{\text{can}}$-semistable. 
	
		The framework of stacks provides a natural setting for Cartier descent. Let 
		$$\mathcal M_{\text{dR},r}=\bigsqcup_{\bar d\in \mathbb Z}\mathcal M_{\text{dR},(r,\bar dp)}$$
		be the stack of flat connections on $X$ such that for any $k$ scheme $T$, $\mathcal M_{\text{dR} ,r}(T)$ is the groupoid of flat $T$-connections. By \cite{BS06}, a flat connection on a curve is always of degree divided by $p$, thus the union is taking along $\bar dp,\ \bar d\in \mathbb Z$. We denote by $\mathcal M_\text{dR}^{\psi=0}$ the closed substack of $\mathcal M_\text{dR}$ parametrizing connections with vanishing $p$-curvature(cf. \cite[4]{LP01}, \cite[Page 531]{Lan14}). Then the previous Frobenius pullback defines a functor
		$$\mathrm {F}^\mathcal B:\mathcal B un_{(X^{(1)},r)}\to \mathcal M_{\text{dR},(X,r)},\quad E\mapsto (\mathrm {fr}^*E,\nabla^\text{can}),$$
		which factors as an equivalence followed by a closed immersion:
		$$\begin{tikzcd}
				{\mathcal Bun_{(X^{(1)},r)}} \arrow[rr, "\mathrm {F}^\mathcal B"] \arrow[rd, "\cong"'] &                                                           & {\mathcal M_{\text{dR},(X,r)}} \\
				& {\mathcal M_{\text{dR},(X,r)}^{\psi=0}} \arrow[ru, hook] &                               
			\end{tikzcd}.$$
		For any flat connection $(V,\nabla)$, forgetting the connection $\nabla$ yields a family of vector bundles $V$. This defines a forgetful functor 
		$$\mathsf {fg}:\mathcal M_{\text{dR},(X,r)}\to \mathcal Bun_{(X,r)},\quad (V,\nabla)\mapsto V,$$
		such that the composition
		$$\phi=\mathsf {fg}\circ \mathrm {F}^\mathcal B:\mathcal Bun_{(X^{(1)},r)}\to \mathcal Bun_{(X,r)},$$ equals the functor given by Frobenius pullback,
	
		

%
%



To construct the universal family of bundles, we pass to a smooth atlas of the moduli stack of bundles. Let $\tau$ be a fixed Harder-Narasimhan polygon type and $P(X)= rX + d + r(1 − g)$ be the Hilbert polynomial. By \cite[Lemma 1.7.9]{HL10}, let $t_0$ be an integer which is sufficiently large. Consider the quote scheme $\mathrm {Quot}_{\mathcal O_{X}^{P_\ell(t_0)}(-t_0)/X/k}^{P_\ell(t)}$ with the universal quotient bundle $E^\text{univ}$ and a $\mathrm {GL}_{P_\ell(t_0)}$ action. There is an open subset $R^{\leq \tau}$ in the quote scheme (thus depends on $t_0$) satisfies
\begin{itemize}
	\item[1.]  for all $q\in R^{\leq \tau}$, $E^\text{univ}|_q$ is torsion free and $E^\text{univ}(t_0)$ is globally generated.
	\item[2.] the evaluation map $\mathrm H^0(X,\mathcal O_{X}^{P_\ell(t_0)})\to \mathrm H^0(X,E^\text{univ}|_q(t_0))$ induced by $q$ is an isomorphism, and	$\mathrm H^i(X,E^\text{univ}|_q(t_0)) = 0$ for all $i \geq 1$.
	\item[3.] for all $q\in R^{\leq \tau}$, the Harder-Narasimhan polygon of $E^\text{univ}|_q$ is below or equal than $\tau$. 
\end{itemize}
Then by \cite[Proposition 2.2.8]{HL10} and \cite[Remark 5.5]{Newstead2012}, the obstruction vanishes for all sufficiently large $t_0$. Therefore every point $q\in R^{\leq \tau}$ is smooth. In this case, $R^{\leq \tau}$ is an irreducible smooth quasi projective variety of dimension $P_\ell(t_0)^2+r^2(g-1)$ (cf. \cite[Remark 5.5]{Newstead2012} or \cite[PREMI\`ERE PARTIE, ROPOSITION 23]{Seshadri82}). In particular, for semi-stable bundles, the associated moduli space forms a smooth irreducible variety $R^\text{ss}$. By the respectability of the quote scheme, we have smooth atlas $R^{\leq \tau}\to \mathcal Bun_{r,d}$. Moreover, there is the universal family $\mathcal O_{X\times R^{\leq \tau}}^{P_\ell(t_0)}(-t_0)\twoheadrightarrow E^\text{univ}$ on $R^{\leq \tau}$. 

For the case of $\mathrm {SL}_r$ bundles, we let $\mathcal Bun_{\mathrm {SL}_r,\mathcal L}$ be the moduli stack of vector bundles with fixed determinant $\det E=\mathcal L$ and rank $r$. In particular, for the line bundle $\mathcal L=\mathcal O_X(dx_0)$, we denote the corresponding moduli stack by $\mathcal Bun_{\mathrm {SL}_r,d}$.  In this case, the determinant of the universal bundle $\det(E^\text{univ})$ defines a map $R^{\leq \tau}\to \mathrm {Pic}^d_X$ with the tangent map (see \cite[Proposition 2.2.7]{HL10}) at $[\mathcal O_{X}^{P_\ell(t_0)}(-t_0)\twoheadrightarrow E]\in R^{\leq \tau}$ given by 
$$T_R|_{[\mathcal O_{X}^{P_\ell(t_0)}(-t_0)\twoheadrightarrow E]}=\mathrm {Hom}(K,E)\xrightarrow{\delta}\mathrm {H}^1(X,\mathrm {End}(E))\xrightarrow{\mathrm {H}^1(\mathrm {tr})}   \mathrm {H}^1(X,\mathcal O_X)=T_{\mathrm {Pic}^d_X}|_{\det E},$$  
where $K$ is the kernel of $\mathcal O_{X}^{P_\ell(t_0)}(-t_0)\twoheadrightarrow E$. This map is surjective and consequently the map $R^{\leq \tau}\to \mathrm {Pic}^d_X$ is smooth. With this notation, we denote the inverse image at $\mathcal L$ by $R^{\leq \tau}_{\mathcal L}$. This is a smooth variety and the corresponding map $R_\mathcal L^{\leq \tau}\to \mathcal Bun_{\mathrm {SL}_r,\mathcal L}$ is a smooth atlas with universal family $\mathcal O_{X\times R^{\leq \tau}}^{P_\ell(t_0)}(-t_0)\twoheadrightarrow E^\text{univ}$.

Let $\mathrm {fr}^{(n)}:X\to X^{(1)}$ be the $n$-th relative Frobenius map. Let $\mathcal Bun_{\mathrm {SL}_r,d_\ell}$ be the moduli stack of vector bundles with the fixed determinant $\det E=\mathcal O_X((d+r\ell)x_0)$ of rank $r$ and degree $d_\ell=d+\ell r$.  We may assume $\ell$ sufficiently large.

\subsection{Splitting of Frobenius pull back}

In this subsection, we show that if $\mathrm {fr}^*E$ is unstable for general $E\in \mathcal Bun_{\mathrm {SL}_r,d_\ell}^\text{ss}$, then for sufficiently large $n\in \mathbb Z_+$, $\mathrm {fr}^{(n)*}E$ splits as a direct sum of proper sub bundles for general $E\in \mathcal Bun_{\mathrm {SL}_r,d_\ell}^\text{ss}$. 

Let us recall the fundamental definitions and properties of ample and semistable vector bundles.

\begin{definition}[\cite{Hart66} section 2]   A vector bundle $E$ on a scheme $X$ is ample if for every coherent sheaf $F$, there is an integer $n_0>0$, such that for every $n \geq n_0$, the sheaf ${F} \otimes \mathrm{Sym}^n({E})$ (where $\mathrm{Sym}^n(E)$ is the $n^{\text {th }}$ symmetric product of $E$) is generated as an $\mathcal{O}_{X}$-module by its global sections.
\end{definition}
	A bundle $E$ on $X$ is ample as a vector bundle if and only if the tautological line bundle $\mathcal{O}_{\mathbf{P} (E)}(1)$ is an ample line bundle on the projective bundle $\mathbf{P}({E})=\mathrm {Proj}_{X}\mathrm {Sym}^{^\bullet}(E)$ (cf. section 3 of \cite{Hart66}). Moreover, if $X$ is proper over $k$ and let
$$
0 \rightarrow {E}^{\prime} \rightarrow {E} \rightarrow {E}^{\prime \prime} \rightarrow 0
$$
be a short exact sequence of vector bundles on $X$ , with ${E}^{\prime}$ and ${E}^{\prime \prime}$ ample. Then $E$ is ample. Indeed, if we assume $X$ is normal and $\mathrm {char}(k)=p>0$, then $E$ is ample if and only if $\mathrm{fr}^*{E}$ is ample.
	
	In the case of the positive characteristic, we have the concept called cohomologically $p$-ample in \cite{Gieseker71} and called F-ample in \cite{Ara04}.

	\begin{definition} Let $X$ be a smooth projective variety defined over $k$ with $\mathrm {char}(k)=p>0$. A coherent sheaf $E$ is called cohomologically $p$-ample if for any locally free sheaf ${F}$, there exists a positive integer $N$ such that
		$$
		\mathrm H^i(X^{(-m)}, \mathrm{fr}^{(m)*}{E} \otimes {F})=0
		$$
		for all $i>0$ and $m>N$.
	\end{definition}
	By \cite[PROPOSITION 5.4]{Ara04}, for a smooth projective curve $X$ over a field $k$, a vector bundle $E$ is cohomologically $p$-ample if and only if $E$ is ample. 
	
	\vspace{10pt}
	
In positive characteristic case, there is another very important notion of the stability condition, namely the strongly semistability(see \cite[Introduction]{Lan04positive} and \cite{Lan08,Lan09}). 
	
\begin{definition} Let $X$ be a projective variety over $k$ and $H$ an ample bundle on $X$.  A coherent sheaf $E$ is called strongly slope $H$-semi-stable, if for any positive integer $\ell$, the pull back $(\mathrm {fr}^*_X)^{\ell}E$ is slope $H$-semi-stable.
\end{definition}

The torsion-free part of a tensor product of strongly slope semistable sheaves is itself strongly slope semistable (cf. \cite[COROLLARY A.3.1]{Lan04mixed}). For other properties of strongly semi-stable sheaves, we refer the readers to \cite{Lan04positive,Lan08,Lan09} and references there in. 
	
	For strongly semi-stable bundles on curves, we have the following lemma.
	
	\begin{lemma}\label{lemma:strample} Let $X$ be a smooth projective curve over a field $k$ with $\mathrm {char}(k)=p>0$. If $E$ is a strongly semi-stable vector bundle with $\deg(E)>0$, then $E$ is ample.
	\end{lemma}
	\begin{proof} According to \cite[Proposition(7.2)]{Hart66} for a fixed rank, there exists an integer $d_0$ such that every indecomposable bundle of degree $\geq d_0$ on a non-singular projective curve $X$ is ample. Let $E$ be a strongly semi-stable bundle with $\deg(E)=d>0$. Then for  $n$ sufficiently large, the indecomposable factors of $\mathrm {fr}^{(n)*}E$ have the same slopes and, consequently, sufficiently large degree. Hence for $n$ large enough, $\mathrm {fr}^{(n)*}E$ is ample and then $E$ is ample. 
	\end{proof}
	 
Indeed, the comparison between ampleness and cohomologically $p$-ampleness yields the following result.
	 
\begin{corollary}
Let $X$ be a smooth projective curve over a field $k$ with $\mathrm {char}(k)=p>0$. If $E$ is a strongly semi-stable vector bundle with $\deg(E)>0$, then for $n$ sufficiently large, $\mathrm {H}^1(X,\mathrm {fr}^{(n)*}E)=\mathrm {H}^1(X,\mathrm {fr}^{(n)*}E\otimes \mathcal O_X)=0$. 
\end{corollary}

Next we have:
\begin{lemma}\label{lemma:ustsplit} 	 
Assume that $\mathrm {fr}^*E$ is unstable for general $E\in \mathcal Bun_{\mathrm {SL}_r,d_\ell}^\text{ss}$. Then for a general $E\in \mathcal Bun_{\mathrm {SL}_r,d_\ell}^\text{ss}$, $\mathrm {fr}^{(n)*}E$ splits as a direct sum of sub bundles for sufficiently large $n\in \mathbb Z_+$. 
\end{lemma}

\begin{proof} To construct the universal family, we should consider the smooth covering $R=R_{\mathcal O(d_\ell)}^\text{ss} \to \mathcal Bun_{\mathrm {SL}_r,d_\ell}^\text{ss}$. Let $E^\text{univ}_R$ be the universal family of semi-stable bundle with rank $r$ and determinant $\mathcal O_X(d_\ell x_0)$. Assume the generic Harder-Narasimhan filtration (cf. \cite[Theorem 3.1]{Nitsure11}) $\tau_1$ of $\mathrm {fr}_R^*E^\text{univ}_R$ is non-trivial. 
	
Let $K(R)$ be the fraction field of $R$ and $\overline {K(R)}$ be its algebraic closure. According to \cite[Theorem 2.7]{Lan04positive}, there exists $N_0$ such that all quotients in the Harder-Narasimhan filtration of $\mathrm {fr}^{(N_0)*}_{\overline{K(R)}}E^\text{univ}_{\overline{K(R)}}$ are strongly semistable. Then for $n\geq N_0$, the Harder-Narasimhan filtration $\tau_n$ of $\mathrm {fr}^{(n)*}_{\overline{K(R)}}E^\text{univ}_{\overline{K(R)}}$ are just the Frobenius pull back of $\tau_{N_0}$. Moreover, the graded pieces $$\mathrm {gr}_i^{\tau_{n}}\mathrm {fr}^{(n)*}_{\overline{K(R)}}E^\text{univ}_{\overline{K(R)}}=\mathrm {fr}^{(n-N_0)*}\mathrm {gr}_i^{\tau_{N_0}}\mathrm {fr}^{(N_0)*}_{\overline{K(R)}}E^\text{univ}_{\overline{K(R)}}$$
are all strongly semi-stable. Let $i>j$, the bundle
\begin{equation}\label{eq:gr}
	\left(\mathrm {gr}_i^{\tau_{n}}\mathrm {fr}^{(n)*}_{\overline{K(R)}}E^\text{univ}_{\overline{K(R)}}\right)^\vee\otimes \left( \mathrm {gr}_j^{\tau_{n}}\mathrm {fr}^{(n)*}_{\overline{K(R)}}E^\text{univ}_{\overline{K(R)}}\right)
\end{equation}
are tensor products of strongly semi-stable bundles and thus semi-stable. By definition of the Harder-Narasimhan filtration, the bundles in (\ref{eq:gr}) are of positive slope, and hence ample by Lemma \ref{lemma:strample}. Then for $n$ sufficiently large, for any $i>j$, we have
$$\mathrm {H}^1(X_{\overline{K(R)}},	\left(\mathrm {gr}_i^{\tau_{n}}\mathrm {fr}^{(n)*}_{\overline{K(R)}}E^\text{univ}_{\overline{K(R)}}\right)^\vee\otimes \left( \mathrm {gr}_j^{\tau_{n}}\mathrm {fr}^{(n)*}_{\overline{K(R)}}E^\text{univ}_{\overline{K(R)}}\right))=0.$$
By the flat base change property, for $n\geq N_0$  sufficiently large, there are open subsets $U_n\subset R$ such that 
$$\mathrm {H}^1(X_{U_n}/U_n,	\left(\mathrm {gr}_i^{\tau_{n}}\mathrm {fr}^{(n)*}_{U_n}E^\text{univ}_{U_n}\right)^\vee\otimes \left( \mathrm {gr}_j^{\tau_{n}}\mathrm {fr}^{(n)*}_{U_n}E^\text{univ}_{U_n}\right))=0.$$
So for $n\geq N_0$  sufficiently large, the Frobenius pullback sheaf $\mathrm {fr}^{(n)*}_{U_n}E^\text{univ}_{U_n}$ splits as a direct sum of sub bundles. Since $\tau_1$ is non-trivial and $\tau_n$ are finer than $\tau_1$, then those sub bundles are proper sub bundles.
\end{proof}

\begin{remark} The splitting over a point is also proved in \cite[Proposition 2.1]{BP04}.
\end{remark}

\subsection{Canonical connections on semi-stable bundle}

In this subsection, we show that a general stable bundle $E$ of rank $r$ and degree d in $\mathcal Bun_{r,d}$, $\mathrm {fr}^*E$ is semi-stable. The proof proceeds by contradiction. Assume that for a general stable bundle $E$, $\mathrm {fr}^*E$ is unstable. By the previous Lemma \ref{lemma:ustsplit}, we choose $n\geq N$ such that the Harder Narasimhan type of the generic Frobenius pull back splits. We start with the rank $2$ case.

\begin{lemma} A general stable bundle $E$ of rank $2$ and degree $d$ in $\mathcal Bun_{2,d}$, $\mathrm {fr}^*E$ is semi-stable. 
\end{lemma}
\begin{proof} In the rank $2$ case, if the pull back of a general stable bundle is an unstable bundle, then for $N$ large and general $E$, $\mathrm {fr}^{(N)*}E$ splits as a direct sum of two line bundles $V_1\oplus V_2$ with $\deg(V_1)=d_1$, $\deg (V_2)=p^Nd-d_1$, $d_1>d_2$. Taking $N$ large enough, we may assume that $d_1-d_2>2g-2$. Consider the unstable rank $2$ bundle $L_1\oplus L_2$ with $\deg L_1=d_1+2g-2$, $\deg L_2=d-d_1-2g+2$.  

Choose a smooth atlas $R\to \mathcal Bun_{(r,d)}$ such that $L_1\oplus L_2$ have preimage in $R$. Thus we have a flat family $\mathcal E$ of vector bundles of rank $2$ and degree
 $d$ parameterized by a smooth curve $T$ (cf. \cite[13.4.A.(b)]{Vakil25} and \cite{Ram78}) such that $\mathcal E|_{t}$ are general stable for $t\in T-\{t_0\}$ and   $\mathcal E|_{t_0}\cong L_1\oplus L_2$.

 Let us consider $\mathrm {fr}^{(N)*}_T \mathcal E$. By shrinking $T$, we may assume $\mathrm {fr}^{(N)*}_T \mathcal E|_{T-\{t_0\}}\cong \mathcal V_1\oplus \mathcal V_2$ with $\deg (\mathcal V_1)=d_1$, $\deg (\mathcal V_2)=p^Nd-d_1$ and $\mathrm {fr}^{(N)*}_T \mathcal E|_{t_0}\cong \mathrm {fr}^{(N)*}L_1\oplus\mathrm {fr}^{(N)*}L_{2}$. By parabolic reduction, (cf. \cite[Proposition 2.2]{Hein08} and \cite[Proposition 7.1.3]{Behrend91}) there is a filtration $\tau$ on $\mathrm {fr}^{(N)*}_T \mathcal E|_{t_0}$ with graded pieces are line bundles of degree $d_1$ and $p^Nd-d_1$. The difference of degrees of these line bundles are larger than $2g-2$, thus the line bundles must split (cf. \cite[Lemma 2.4]{BPRS24}). That is, we have an isomorphism $\mathrm {fr}^{(N)*}L_1\oplus\mathrm {fr}^{(N)*}L_{2}\cong M_1\oplus M_2$ with $\deg(M_1)=d_1$, $\deg(M_2)=p^Nd-d_1$. This isomorphism will lead a contradiction since the decomposition into indecomposable bundles is unique up to isomorphism and permutation as in \cite[LEMMA 2.3]{Schiffmann16} and \cite[Theorem 2, Lemma 6,7]{Atiyah56}. 
\end{proof}

Then we consider the rank $r\geq 3$ cases. We assume   $\mathrm {rank}(\mathrm {gr}_i^{\tau_{N}}\mathrm {fr}^{(N)^*}_UE_U^\text{univ})=r_i$ and  $\mathrm {deg}(\mathrm {gr}_i^{\tau_{N}}\mathrm {fr}^{(N)^*}_UE_U^\text{univ})=d_i$ for $i=1,...,s$. Then $\sum_{i=1}^sr_i=r$, $\sum_{i=1}^s{d_i}=p^{N}d$.

\begin{lemma} \label{lemma:keylem} There exists integers $\ell_1,...,\ell_r\in \mathbb Z$ such that  $\ell_1+\cdots+\ell_r=d$, $\ell_i\geq \ell_{i+1}+2g-2$ and $\ell_{r-1}\geq 0$ such that for any permutation $\sigma\in S_r$, $(\ell_1,...,\ell_r)$ is not a solution to the equation
	\begin{equation}\label{eq:degeq}
		\begin{cases}
			x_{\sigma(1)}+\cdots+x_{\sigma(r_1)}&=d_1/p^{N}\\
			x_{\sigma(r_1+1)}+\cdots+x_{\sigma(r_1+r_2)}&=d_2/p^{N}\\
			\cdots\cdots&\\
			x_{\sigma(r_1+\cdots+r_{s-1}+1)}+\cdots+x_{\sigma(r)}&=d_s/p^{N}
		\end{cases}
	\end{equation}
\end{lemma}

\begin{proof} Let us consider the first $r-1$ variables $x_1,...,x_{r-1}\in \mathbb Q$ and take $x_r=d-x_1-\cdots-x_{r-1}$. If the first $r-1$ variables are in $\mathbb Z$, then $x_r$ is in $\mathbb Z$. Let us consider the equations in \ref{eq:degeq}. For fixed permutation $\sigma$, these equations define a proper affine subspace $W_{\sigma}$ in $\mathbb Q^{r-1}$ (with Zariski topology) and let $\mathrm {ref}_i$ be the reflection of the $i$-th coordinate for $1\leq i\leq r-1$. Then we have a non-empty Zariski open subset
$$U=\mathbb A^{r-1}_{\mathbb Q}-\cup_{i=1}^{r-1}\mathrm {ref}_i[\bigcup_{\sigma\in S_{r}} W_{\sigma}\cup \ \bigcup_{1\leq i<j\leq r-1} V(x_i-x_j)]$$
Since the integral points $\mathbb Z^{r-1}$ are dense in  $\mathbb A^{r-1}_{\mathbb Q}$, there is an integer point $(y_1,...,y_{r-1})$ in $U$. Note that $U$ is invariant under the coordinate reflection and we may assume $y_1>y_2>\cdots>y_{r-1}$. Moreover, $\mathbb A^1_{\mathbb Q}\cdot (y_1,...,y_{r-1})\cap U$ is non-empty thus a Zariski open subset of  $\mathbb A^1_{\mathbb Q}$. For $t$ sufficiently large, we can scale the tuple $(y_1,...,y_{r-1})$ to $(ty_1,...,ty_{r-1})\in U$. Then we find those $(\ell_1,...,\ell_{r})$.  
\end{proof}

As an application, we could show the following result.

\begin{proposition}\label{prop:genricss} 
A general stable bundle $E$ of rank $r$ and degree $d$ in $\mathcal Bun_{r,d}$, $\mathrm {fr}^*E$ is semi-stable. 
\end{proposition}

\begin{proof} 
Consider the unstable bundle $L_1\oplus\cdots\oplus L_r$ in $\mathcal Bun_{r,d}$ with $L_i\in \mathrm {Pic}_{X^{(N_0)}}^{\ell_i}$. Choose a smooth atlas $R\to \mathcal Bun_{(r,d)}$ such that   $L_1\oplus\cdots\oplus L_r$ have a preimage in $R$. Thus we have a flat family $\mathcal E$ of vector bundles of rank $r$ and degree
 $d$ parameterizing by a smooth curve $T$ (cf. \cite[13.4.A.(b)]{Vakil25} and \cite{Ram78}) such that $\mathcal E|_{t}$ are general stable for $t\in T-\{t_0\}$ and   $\mathcal E|_{t_0}\cong L_1\oplus\cdots\oplus L_r$. 
 
 Now we consider $\mathrm {fr}^{(N)*}_T \mathcal E$. By shrinking $T$, we may further assume $\mathrm {fr}^{(N)*}_T \mathcal E|_{T-\{t_0\}}\cong \mathcal V_1\oplus\cdots\oplus \mathcal V_s$ with $\mathrm{rank}(\mathcal V_i) =r_i, \deg (\mathcal V_i)=d_i$ and $\mathrm {fr}^{(N)*}_T \mathcal E|_{t_0}\cong \mathrm {fr}^{(N)*}L_1\oplus\cdots\oplus \mathrm {fr}^{(N)*}L_{r}$. By the parabolic reduction (cf. \cite[Proposition 2.2]{Hein08} and \cite[Proposition 7.1.3]{Behrend91}), there is a filtration $\tau$ on $\mathrm {fr}^{(N)*}_T \mathcal E|_{t_0}$ with graded pieces of the types $(r_i,d_i)$, $i=1,...,s$. Then we have an $\mathbb A^1$ family of bundles whose generic fibers are isomorphic to $\mathrm {fr}^{(N)*}_T \mathcal E|_{t_0}$ and its central fiber isomorphic to $\oplus_{i=1}^s\mathrm {gr}_i^\tau$. On the other hand, the splitting of line bundles induces a Borel reduction thus there is an other filtration on $\oplus_{i=1}^s\mathrm {gr}_i^\tau$ with graded pieces are line bundles of degree $p^N\ell_i$, $i=1,...,r$. The difference of degrees of these line bundles are larger than $2g-2$, thus the line bundles must split (cf. \cite[Lemma 2.4]{BPRS24}). That is we have an isomorphism $\oplus_{i=1}^s\mathrm {gr}_i^\tau\cong \oplus_{i=1}^r  M_{i}$ with $M_i\in \mathrm {Pic}_X^{p^N\ell_i}$. Then by the uniqueness of indecomposable decomposition as in \cite[LEMMA 2.3]{Schiffmann16} and \cite[Theorem 2, Lemma 6,7]{Atiyah56}, the isomorphism $\oplus_{i=1}^s\mathrm {gr}_i^\tau\cong \oplus_{i=1}^r  M_{i}$ shows that $(\ell_1,...,\ell_r)$ is a solution to the equation (\ref{eq:degeq}) for some permutation $\sigma\in S_r$, thus we get a contradiction by Lemma \ref{lemma:keylem}. 
 \end{proof}

Stability is invariant under tensoring by a line bundle; therefore we have:

\begin{corollary} \label{cor:genpullbackss} 
Given a general stable bundle $E$ of rank $r$ and $\det E\cong \mathcal O_{X^{1}}(dx_0)$ in $\mathcal Bun_{\mathrm {SL}_r,\mathcal O_{X^{(1)}}(dx_0)}$, then $\mathrm {fr}^*E$ is semi-stable. 
\end{corollary}

and

\begin{corollary}\label{cor:rationalmap} 	Let $X$ be a smooth projective curve of genus $g\geq 2$ over an algebraically closed field $k$ of characteristic $p >0$, and $\mathrm {fr} : X \to X^{(1)}$ the relative Frobenius morphism. Then the Frobenius pullback define rational maps $$\phi_{\mathrm{GL}_r,d}:\mathrm {Bun}_{X^{(1)},(r,d)}^\text{ss} \dashrightarrow \mathrm {Bun}^\text{ss}_{X,(r,pd)}\ \text{and}\ \phi_{\mathrm{SL}_r,d}:\mathrm {Bun}_{\mathrm {SL}_r,d}^\text{ss} \dashrightarrow \mathrm {Bun}^\text{ss}_{\mathrm {SL}_r,pd},\ E\mapsto \mathrm {fr}^*E.$$
\end{corollary}

\subsection{Proof of main theorem}

We are now ready to prove the main result.

\begin{theorem}\label{Thm:main1}
Let $X$ be a smooth projective curve of genus $g\geq 2$ over an algebraically closed field $k$ of characteristic $p >0$, and $\mathrm {fr} : X \to X^{(1)}$ the relative Frobenius morphism. 
\begin{itemize}
\item[(a)] the Frobenius pullback $\mathrm {fr}^*E$ of a general semi-stable vector bundle $E$ with rank $r$ and degree $d$ in $\mathrm {Bun}^\text{ss}_{(r,d)}$ is still semi-stable.  
\item[(b)] the set-theoretic map \[\phi^\mathrm {Bun}_{(r,d)}:\mathrm {Bun}_{X^{(1)},(r,d)}^\text{ss} \dashrightarrow \mathrm {Bun}^\text{ss}_{X,(r,pd)},\ E\mapsto \mathrm {fr}^*E\] induced by the Frobenius pullback is a dominant rational map on the moduli spaces.
\end{itemize}
\end{theorem}
\begin{remark} In general, the rational map $\phi^\mathrm {Bun}_{(r,d)}$ is not defined over $\mathrm {Bun}^\text{ss}_{(r,d)}$. If $d=0$,  Gieseker \cite{Gieseker73} showed that there is a vector bundle $E\in \mathrm {Bun}_{(r,0)}^\text{ss}$ such that $\mathrm {fr}^*E$ is unstable. For $d\neq 0$, a Frobenius direct image of a line bundle $\mathrm {fr}_*L$ is semistable but its Frobenius pullback $\mathrm {fr}^*\mathrm {fr}_*L$ is always unstable (cf. \cite{Sun08}).  
\end{remark}

By Corollary \ref{cor:rationalmap}, for the proof of Theorem \ref{Thm:main1}, we have to show that $\phi$ is dominant.

\begin{lemma}\label{lemma:stabledom} Let $X$ be a smooth projective curve of genus $g\geq 2$ over an algebraically closed field $k$ of characteristic $p >0$, if there is a stable bundle $V$ of rank $r$ and determinant $\mathcal O_X(pd x_0)$ such that the canonical connections on $V$ is parameterized by a non-empty scheme, then the rational maps $$\phi_{\mathrm {SL}_r,d}:\mathrm {Bun}_{\mathrm{SL}_r,d}^\text{ss} \dashrightarrow \mathrm {Bun}^\text{ss}_{\mathrm {SL}_r,pd}\ \text{and}\  \phi_{\mathrm {GL}_r,d}:\mathrm {Bun}_{X^{(1)},(r,d)}^\text{ss} \dashrightarrow \mathrm {Bun}^\text{ss}_{X,(r,pd)}$$ are dominant.
\end{lemma}

\begin{proof}  We can first consider the $\mathrm {SL}_r$ bundle case. Here we consider $\mathrm {SL}_r$ bundle because the stabilizer of a stable bundle in the moduli stack is finite. In this case, the map $\{V\}\to \mathcal Bun_{\mathrm {SL}_r,d}$ is universally closed. 

Let $E$ be a stable bundle such that $V=\mathrm {fr}^*E$ is stable and we denote the canonical connection by $\nabla^\text{can}$. Consider the forgetful map
	$$\mathtt {fg}: \mathcal M_{\mathrm{dR},\mathrm {SL}_r,pd}^\text{ss}\to \mathcal Bun_{\mathrm {SL}_r,pd},\ (F,\nabla)\mapsto F.$$
	 For a stable bundle $V$ the fiber  $\mathtt{fg}^{-1}V$ at the closed point $V\in \mathcal Bun^\text{s}_{\mathrm {SL}_r,pd}$ is an affine space $\Gamma=\Gamma(X,\mathrm {End}^0(V)\otimes \Omega_X^1)$ and the map $\mathtt{fg}^{-1}\{V\}\to \mathcal M^\text{ss}_{\mathrm {dR},\mathrm {SL}_r,d}$ is universally closed. 
	
	Then consider the fiber product
	$$\begin{tikzcd}
		C\arrow[r] \arrow[d] \arrow[dr, phantom, "\ulcorner"]
		& \mathrm F^ \mathcal B(\mathcal Bun^\text{ss}_{\mathrm {SL}_r,d}) \arrow[d] \\
		\Gamma \cdot[V,\nabla^\text{can} ]=\mathtt {fg}^{-1}(V)\arrow[r]
		& \mathcal M_{\mathrm{dR},\mathrm {SL}_r,pd}^\text{ss}
	\end{tikzcd}$$
	Here $C$ is the space of canonical connections on $V$. $C$ is non-empty since there is a canonical connection on $V$. By universally closed property of $\mathrm F^ \mathcal B(\mathcal Bun^\text{ss}_{\mathrm {SL}_r,d})$, $C$ is a (universally closed, separated and finite type thus) projective subscheme in the affine scheme $\Gamma$ thus finite. By this, we see that the canonical connections on $V$ are parameterized by a finite scheme over $k$. So by Cartier descent, the fiber $\phi^{-1}_{\mathrm {SL}_r,d}(V)$ at $V$  of the scheme map $\phi_{\mathrm {SL}_r,d}$ is isomorphic to the scheme parameterizing canonical connections on $V$. Thus the fiber dimension of $\phi$ is zero and $\phi$ is dominant.  
\end{proof}

We begin with the case where the rank and degree are coprime.

\begin{lemma}\label{lem:1rdcoprime} Let rank $r$ and degree $d$ are coprime, and $r$ and $p$ are coprime. The Frobenius pull back induced map $\phi_{\mathrm {SL}_r}:\mathrm {Bun}_{\mathrm {SL}_r,d}^\text{ss} \dashrightarrow \mathrm {Bun}^\text{ss}_{\mathrm {SL}_r,pd}$ is dominant.
\end{lemma}

\begin{proof} In the case that $(r,d)=1$ and $(r,p)=1$, for both of the domain and target of $\phi$, the stability conditions coincide with the semi-stability conditions and both moduli spaces are smooth and projective. Thus by Corollary \ref{cor:genpullbackss}, there is a stable bundle posses a canonical connetion. So by Lemma \ref{lemma:stabledom}, we have the proof. \end{proof}
	
%

		\begin{lemma}\label{lem:dominant}  For given two positive integers $p$ and $d$, which are not necessarily coprime, the Frobenius pull back map $\phi_{\mathrm {SL}_r}:\mathrm {Bun}_{\mathrm {SL}_r,d}^\text{ss} \dashrightarrow \mathrm {Bun}^\text{ss}_{\mathrm {SL}_r,pd}$ is dominant.
	\end{lemma}

	\begin{proof} In this case, we should again consider the defomation technique in Proposition \ref{prop:genricss}. Let us assume the generic Frobenius pullback bundle $\mathrm {fr}^{*}E$ is strictly semi-stable, that is it has non-trivial Jordan-H\"oder filtration. 
    
    Let $\eta\in \mathcal Bun_{\mathrm {SL}_r,d}$ be the generic point. We may assume $\mathrm {fr}^*_\eta E^\text{univ}_\eta$ must be strictly semi-stable. If it is stable, then there is an open subset $U$ in  $\mathcal Bun_{\mathrm {SL}_r,d}$, such that for all $E\in U$, $\mathrm {fr}^*E$ is stable and by Lemma \ref{lemma:stabledom} we get the proof. Since $\mathrm {fr}^*_\eta E^\text{univ}_\eta$ is strictly semi-stable, we have a non-trivial subbundle and quotient bundle of $\mathrm {fr}^*_\eta E^\text{univ}_\eta$ with both of them semi-stable with slope $\frac{pd}{r}$. The filtration spread out to an open subset $U$, thus the bundle $\mathrm {fr}^*_U E^\text{univ}_U$ have a non-trivial filtration of vector bundles such that the graded pieces are semi-stable with slope $\frac{pd}{r}$.

Thus we can pick an unstable bundle $L_1\oplus\cdots\oplus L_r$ in $\mathcal Bun_{r,d}$ with certain bad degree. Choose a smooth atlas $R\to \mathcal Bun_{(r,d)}$ such that   $L_1\oplus\cdots\oplus L_r$ have a preimage in $R$. Thus we have a flat family $\mathcal E$ of vector bundles of rank $r$ and degree
 $d$ parameterizing by a smooth curve $T$ such that $\mathcal E|_{t}$ are general stable for $t\in T-\{t_0\}$ and   $\mathcal E|_{t_0}\cong L_1\oplus\cdots\oplus L_r$. Now we consider $\mathrm {fr}^{*}_T \mathcal E$. By shrinking $T$, we may further assume $\mathrm {fr}^{*}_T \mathcal E|_{T-\{t_0\}}$ have a non-trivial filtration of vector bundles such that the graded pieces are semi-stable with slope $\frac{pd}{r}$. By the parabolic reduction (cf. \cite[Proposition 2.2]{Hein08} and \cite[Proposition 7.1.3]{Behrend91}), there is a filtration $\tau$ on $\mathrm {fr}^{*}_T \mathcal E|_{t_0}$ with graded pieces of slope $\frac{pd}{r}$. Then we can construct an $\mathbb A^1$ family of bundles whose generic fibers are isomorphic to $\mathrm {fr}^{*}_T \mathcal E|_{t_0}$ and its central fiber isomorphic to $\oplus_{i=1}^2\mathrm {gr}_i^\tau$. On the other hand, the splitting of line bundles induces a Borel reduction thus there is an other filtration on $\oplus_{i=1}^2\mathrm {gr}_i^\tau$ with graded pieces are line bundles of degree $p\deg L_i$, $i=1,...,r$. We may assume the difference of degrees of these line bundles are larger than $2g-2$, thus the line bundles must split (cf. \cite[Lemma 2.4]{BPRS24}). Then we have an isomorphism $\oplus_{i=1}^2\mathrm {gr}_i^\tau\cong \oplus_{i=1}^r  M_{i}$ with $\deg M_i=p\deg L_i$. Then by the uniqueness of indecomposable decomposition as in \cite[LEMMA 2.3]{Schiffmann16} and \cite[Theorem 2, Lemma 6,7]{Atiyah56}, the isomorphism $\oplus_{i=1}^s\mathrm {gr}_i^\tau\cong \oplus_{i=1}^r  M_{i}$ shows that $(\ell_1,...,\ell_r)$ is a solution to the equation 
 $$\begin{cases}
     x_1+\cdots+x_{r_1}=&\frac{pdr_1}{r}\\
     x_{r_1+1}+\cdots+x_{r_2}=&\frac{pdr_2}{r}
 \end{cases}.$$  Thus by a good choice of the degree of $L_i$'s as in Lemma \ref{lemma:keylem}, we get a contradiction. Thus there is an open subset $U$ in  $\mathcal Bun_{\mathrm {SL}_r,d}$, such that for all $E\in U$, $\mathrm {fr}^*E$ is stable and by Lemma \ref{lemma:stabledom} we get the proof. 
\end{proof}

\section{Corollaries of the main theorem}

In this section, we give some corollaries of the Theorem \ref{Thm:main1}.

\subsection{Semistability of Frobenius direct image sheaves} Recall the following Faltings - Le Potier's cohomological criterion of the semistability. 
\begin{proposition}[cf. \cite{Fal93} and Th\'eor\'eme 2.4, 2.5 in \cite{LeP96}]\label{prop:FL} Let $E$ be a vector bundle of rank $r$ and degree $d$ over $X$, let $r_1=\frac{r}{\mathrm {gcd}(r,d)}$. Then
\begin{itemize}
\item[1.] If there exists a vector bundle $V$ with $\mu(V)+\mu(E)=(g-1)$ such that $h^0(X,E\otimes V)=h^1(X,E\otimes V)=0$, then $E$ and $V$ are both semistable.
\item[2.] If $E$ is semistable, then for any integer $\ell>\frac{r^2}{4}(g-1)$, a general vector bundle $V$ in the moduli space of semistable bundles with rank $\frac{\ell r}{\mathrm {gcd}(r,d)}=\ell\cdot r_1$ and degree $\ell r_1(g-1-\mu(E))$(this degree condition is equivalent to $\mu(V)=g-1-\mu(E)$) has the property that $h^0(X,E\otimes V)=h^1(X,E\otimes V)=0$.
\end{itemize}
\end{proposition}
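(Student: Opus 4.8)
The plan is to handle the two assertions separately: assertion (1) is a quick Riemann--Roch argument, while assertion (2) is the genuine existence statement and carries all the difficulty. For (1), the first step is the Euler-characteristic identity on the curve $X$: for any vector bundles $E,V$,
\[
\chi(X, E \otimes V) = \operatorname{rank}(E)\,\operatorname{rank}(V)\,\bigl(\mu(E) + \mu(V) - (g-1)\bigr),
\]
so the hypothesis $\mu(E)+\mu(V) = g-1$ says exactly $\chi(E \otimes V) = 0$, and under it $h^0(E\otimes V)=0$ is equivalent to $h^1(E\otimes V)=0$. Next, suppose $E$ were not semistable and choose a subsheaf $F \subsetneq E$ with $\mu(F) > \mu(E)$. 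Because $V$ is locally free, tensoring keeps the inclusion, so $F \otimes V \hookrightarrow E \otimes V$ and $h^0(F \otimes V) \le h^0(E \otimes V) = 0$. But $\mu(F)+\mu(V) > \mu(E)+\mu(V) = g-1$, so the identity gives $\chi(F\otimes V) > 0$ and hence $h^0(F \otimes V) \ge \chi(F \otimes V) > 0$, a contradiction. Thus $E$ is semistable, and exchanging the roles of $E$ and $V$ (again using that $E$ is locally free) shows $V$ is semistable too.

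For (2) the content is pure existence, so I would first reduce to exhibiting a single good $V$. Note the rank $\ell r_1$ is chosen precisely so that the slope $\mu(V) = g-1-\mu(E)$ is attained by an integral degree $\ell r_1(g-1-\mu(E))$; every $V$ in the moduli space $\mathcal M$ of semistable bundles with these invariants then has $\chi(E \otimes V)=0$, so by semicontinuity the locus $\{h^0(E\otimes V)=0\}=\{h^1(E\otimes V)=0\}$ is open, and it suffices to show it is nonempty. Writing $H^0(E\otimes V)=\operatorname{Hom}(E^\vee,V)$, this amounts to proving that the bad locus
\[
\mathcal B = \{\,V \in \mathcal M : \operatorname{Hom}(E^\vee, V) \neq 0\,\}
\]
is a proper subset, i.e. $\dim \mathcal B < \dim \mathcal M = (\ell r_1)^2(g-1)+1$.

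To estimate $\dim\mathcal B$ I would stratify by the image of a nonzero map $\varphi\colon E^\vee \to V$. Such an image $I$ has some rank $s$ and degree $e$; since $E^\vee$ is semistable of slope $-\mu(E)$ (being a source for a quotient) and $V$ is semistable of slope $g-1-\mu(E)$ (containing $I$ as a subsheaf), one gets $-\mu(E)\le \mu(I)\le g-1-\mu(E)$ and $1 \le s \le r$. For each admissible $(s,e)$ the sheaf $I$ moves in a bounded family (it is a quotient of the fixed bundle $E^\vee$, hence cut out in a Quot scheme of controlled dimension), and a $V$ containing $I$ is an extension $0 \to I \to V \to Q \to 0$ with $Q$ varying in its moduli and the class in $\operatorname{Ext}^1(Q,I)$; summing these contributions and subtracting from $\dim\mathcal M$ produces a codimension governed by a term of the shape $s(r-s)(g-1)$ up to lower-order corrections. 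This is maximized at $s=r/2$ with value $\tfrac{r^2}{4}(g-1)$, so the hypothesis $\ell > \tfrac{r^2}{4}(g-1)$ is exactly what keeps the codimension of every stratum positive, forcing $\mathcal B \subsetneq \mathcal M$ and hence $h^0(E\otimes V)=h^1(E\otimes V)=0$ for general $V$.

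The main obstacle is this final estimate. Running the stratification uniformly and bookkeeping the dimensions of the Quot scheme of quotients of $E^\vee$ and of the extension spaces $\operatorname{Ext}^1(Q,I)$ carefully enough to land on the sharp threshold $\tfrac{r^2}{4}(g-1)$ is delicate, and the extremal stratum $s\approx r/2$ must be shown to be the binding one. As an alternative I would keep in mind Faltings' route via generalized theta functions, where $\mathcal B$ is realized as the zero scheme of a section of a determinant-of-cohomology line bundle on $\mathcal M$ and one proves this section is not identically zero; the numerical condition on $\ell$ reappears there as the input guaranteeing effectivity together with non-vanishing.
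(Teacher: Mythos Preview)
The paper does not give a proof of this proposition at all: it is stated with the attribution ``cf.\ \cite{Fal93} and Th\'eor\`eme 2.4, 2.5 in \cite{LeP96}'' and then used as a black box. So there is no ``paper's own proof'' to compare against; you are supplying an argument where the authors simply cite the literature.

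Your argument for part (1) is correct and is exactly the standard one.

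For part (2), your strategy (reduce to a single $V$ by semicontinuity, then bound the dimension of the bad locus $\mathcal B$ by stratifying over the image of a nonzero $E^\vee\to V$) is the right shape and is close to what Le~Potier actually does. But the sketch as written has a gap you yourself flag: the sentence ``produces a codimension governed by a term of the shape $s(r-s)(g-1)$'' does not explain how $\ell$ enters, and without that the conclusion ``$\ell>\tfrac{r^2}{4}(g-1)$ keeps the codimension positive'' is a non sequitur. What one actually needs is an inequality of the form
\[
\dim(\text{stratum for rank }s)\;\le\;\dim\mathcal M\;-\;\ell\;+\;s(r-s)(g-1)\;+\;(\text{bounded error}),
\]
so that positivity of the codimension for every $s$ forces $\ell$ to beat $\max_s s(r-s)(g-1)=\tfrac{r^2}{4}(g-1)$. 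Getting that $-\ell$ term requires tracking how the rank $n=\ell r_1$ of $V$ feeds into the extension count $\dim\operatorname{Ext}^1(Q,I)$ versus $\dim\mathcal M$, and this is precisely the bookkeeping you defer. Your alternative via the determinant-of-cohomology line bundle is Faltings' original route and is also valid; either way, the numerical threshold is not yet derived in your write-up, only asserted. Since the paper treats the proposition as a citation, this level of sketch is arguably adequate for the paper's purposes, but as a self-contained proof it is incomplete at exactly the point you identify.
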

Now we can prove the following corollary (cf. \cite[Theorem 1.1]{MP07}\cite[Theorem 2.2]{Sun08}) by Theorem \ref{Thm:main1} and without using the covering trick or inequalities.

\begin{corollary} Assume $X$ is a smooth projective curve over an algebraic closed field $k$ with $\mathrm{char}(k)=p >0$. Then the Frobenius direct image of a semistable bundle over $X$ is semistable.
\end{corollary}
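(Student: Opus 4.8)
The plan is to deduce the semistability of $\mathrm {fr}_*E$ from the cohomological criterion of Faltings--Le Potier (Proposition \ref{prop:FL}), feeding it a test bundle on $X^{(1)}$ that is itself a Frobenius pullback. Concretely, to show $\mathrm {fr}_*E$ is semistable it suffices, by Proposition \ref{prop:FL}(1), to exhibit a vector bundle $V$ on $X^{(1)}$ with $\mu(V)+\mu(\mathrm {fr}_*E)=g-1$ and $h^0(X^{(1)},\mathrm {fr}_*E\otimes V)=h^1(X^{(1)},\mathrm {fr}_*E\otimes V)=0$. The decisive reduction is the projection formula $\mathrm {fr}_*E\otimes V\cong \mathrm {fr}_*(E\otimes \mathrm {fr}^*V)$ together with the finiteness (affineness) of $\mathrm {fr}$, which gives $H^i(X^{(1)},\mathrm {fr}_*E\otimes V)\cong H^i(X,E\otimes \mathrm {fr}^*V)$ for all $i$. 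Thus the desired vanishing on $X^{(1)}$ is equivalent to $h^0(X,E\otimes \mathrm {fr}^*V)=h^1(X,E\otimes \mathrm {fr}^*V)=0$ on $X$, which is exactly the input needed to apply Proposition \ref{prop:FL} to the given semistable bundle $E$, provided the test bundle $W:=\mathrm {fr}^*V$ has the correct slope.

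First I would pin down the numerics. Since $\mathrm {fr}$ has degree $p$ and $X^{(1)}$ has genus $g$, finiteness gives $\chi(X^{(1)},\mathrm {fr}_*E)=\chi(X,E)$, whence $\mathrm {rank}(\mathrm {fr}_*E)=pr$ and $\deg(\mathrm {fr}_*E)=\deg(E)+r(p-1)(g-1)$, so that $\mu(\mathrm {fr}_*E)=\tfrac{\mu(E)+(p-1)(g-1)}{p}$. Imposing $\mu(V)+\mu(\mathrm {fr}_*E)=g-1$ then forces $\mu(V)=\tfrac{(g-1)-\mu(E)}{p}$, and hence $\mu(\mathrm {fr}^*V)=p\,\mu(V)=(g-1)-\mu(E)$. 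This is precisely the slope appearing in Proposition \ref{prop:FL}(2) for $E$: a general semistable $W$ of rank $\ell r_1$ (with $r_1=r/\gcd(r,d)$) and degree $\ell r_1(g-1-\mu(E))$ satisfies $h^0(X,E\otimes W)=h^1(X,E\otimes W)=0$ once $\ell>\tfrac{r^2}{4}(g-1)$. To realize such a $W$ as a Frobenius pullback I would take $\ell$ to be a multiple of $p$ (still satisfying the bound), which makes $\deg(W)$ divisible by $p$; the candidate $V$ then has rank $\ell r_1$ and degree $\deg(W)/p$.

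Next I would combine two generic statements inside the \emph{irreducible} moduli space $\mathrm {Bun}^\text{ss}_{X,(\ell r_1,\deg W)}$. On one hand, Proposition \ref{prop:FL}(2) shows the locus $U_1$ of semistable $W$ with $h^0(X,E\otimes W)=h^1(X,E\otimes W)=0$ is a non-empty open set; openness is semicontinuity of cohomology in families, using that $\chi(X,E\otimes W)=0$ so that $h^0=0\Leftrightarrow h^1=0$. On the other hand, Theorem \ref{Thm:main1}(b) says the Frobenius pullback map $\mathrm {F}^\mathrm {Bun}:\mathrm {Bun}^\text{ss}_{X^{(1)},(\ell r_1,\deg W/p)}\dashrightarrow \mathrm {Bun}^\text{ss}_{X,(\ell r_1,\deg W)}$ is dominant, so its image contains a dense open $U_2$ consisting of bundles $\mathrm {fr}^*V$ with $V$ semistable. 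By irreducibility $U_1\cap U_2\neq\emptyset$, yielding a semistable $V$ on $X^{(1)}$ with $W=\mathrm {fr}^*V$ semistable and $h^0(X,E\otimes\mathrm {fr}^*V)=h^1(X,E\otimes\mathrm {fr}^*V)=0$. Transporting back through the projection formula gives $h^0(X^{(1)},\mathrm {fr}_*E\otimes V)=h^1(X^{(1)},\mathrm {fr}_*E\otimes V)=0$, and the slope computation gives $\mu(V)+\mu(\mathrm {fr}_*E)=g-1$. Proposition \ref{prop:FL}(1) then forces both $V$ and $\mathrm {fr}_*E$ to be semistable.

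The main obstacle is reconciling all numerical constraints simultaneously: the test bundle must have the slope dictated by Faltings--Le Potier duality for $\mathrm {fr}_*E$, have degree divisible by $p$ so it can be a Frobenius pullback, and have rank large enough for the bound $\ell>\tfrac{r^2}{4}(g-1)$. Choosing $\ell$ a large multiple of $p$ reconciles these, and the essential geometric point is that the vanishing locus and the Frobenius-pullback image are both dense in the \emph{same} irreducible moduli space, so they must meet. This is exactly where the dominance in Theorem \ref{Thm:main1}(b) is indispensable, allowing the argument to bypass the covering trick and the inequalities used in the earlier proofs.
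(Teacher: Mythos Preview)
Your proposal is correct and follows essentially the same route as the paper: apply Proposition~\ref{prop:FL}(2) to the semistable bundle $E$ to obtain a general test bundle $W$ with vanishing $h^i(X,E\otimes W)$, invoke the dominance of $\mathrm F^{\mathrm{Bun}}$ from Theorem~\ref{Thm:main1}(b) (equivalently Corollary~\ref{coro:main1}) to write such a $W$ as $\mathrm{fr}^*G'$, and then use the projection formula to transfer the vanishing to $h^i(X^{(1)},\mathrm{fr}_*E\otimes G')$ and conclude via Proposition~\ref{prop:FL}(1). Your treatment is in fact more explicit than the paper's about the slope computation for $\mathrm{fr}_*E$ and about why taking $\ell$ divisible by $p$ makes the target degree divisible by $p$, but the argument is the same.
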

\begin{proof} From Theorem \ref{Thm:main1}, we know that the Frobenius pull back map on the moduli spaces is rational and dominant. Hence a general vector bundle $$G \in \mathrm{Bun}^\text{ss}_{(\ell r_1,p\ell r_1\cdot (g-1-\mu(E)))}$$ is of the form $\mathrm {fr}^* G^{\prime}$ for some $$G^{\prime} \in \mathrm{Bun}^\text{ss}_{(X^{1},\ell r_1,\ell r_1\cdot (g-1-\mu(E)))}.$$ Now let $E$ be a semistable bundle with rank $r$ and degree $d$. Then by Proposition \ref{prop:FL}, $h^0(X, E \otimes G)=h^1(X,E\otimes G)=0$ for a general $G \in \mathrm{Bun}^\text{ss}_{(\ell r_1,p\ell r_1\cdot (g-1-\mu(E)))}$. Assuming that $G$ is general, we can write $G=\mathrm {fr}^* G^{\prime}$ and by adjunction we obtain
$$
h^i\left(X, E \otimes \mathrm {fr}^* G^{\prime}\right)=h^i\left(X^{(1)}, \mathrm {fr}_* E \otimes G^{\prime}\right)=0,\quad i=0,1.
$$
This shows that $\mathrm {fr}_*E$ is semistable by Proposition \ref{prop:FL}.
\end{proof}

\subsection{Strongly semistable bundle is very general} In the case of the positive characteristic, we have a very important notion of the stability condition, namely the strongly semistability. Given an ample coherent sheaf $H$, a sheaf $E$ is called strongly slope $H$-semistable, if for any positive integer $\ell$, the pull back $(\mathrm {fr}^*)^{\ell}E$ is slope $H$-semistable. For basic properties of strongly semi-stable sheaves, we refer the readers to \cite{Lan08,Lan09} and references there in. 

 By our Theorem \ref{Thm:main1}, we have the following corollary.
\begin{corollary} Assume $X$ is a smooth projective curve over an algebraic closed field $k$ with $\mathrm{char}(k)=p>0$. If the field $k$ has uncountably many elements, then the strongly semistable bundle is non empty in the moduli space of semistable bundles.
\end{corollary}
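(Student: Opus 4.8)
The plan is to show that strong semistability is a \emph{very general} condition and then to exploit the uncountability of $k$. Write $d_\ell=p^\ell d$, and recall that each coarse space $\mathrm{Bun}^\text{ss}_{(r,d_\ell)}$ is an irreducible normal variety of dimension $r^2(g-1)+1\geq 2$. Since $k$ is perfect, the base-change isomorphisms $X^{(\ell)}\xrightarrow{\cong}X$ identify $\mathrm{Bun}^\text{ss}_{X^{(\ell)},(r,d')}$ with $\mathrm{Bun}^\text{ss}_{X,(r,d')}$ compatibly with semistability, so I may regard every iterated Frobenius pullback as living over the fixed curve $X$ and regard $(\mathrm{fr}^*)^\ell$ as the composite of the maps furnished by Theorem \ref{Thm:main1}.

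First I would introduce, for each $\ell\geq 1$, the locus
\[
S_\ell=\{E\in\mathrm{Bun}^\text{ss}_{(r,d)} : (\mathrm{fr}^*)^j E\ \text{is semistable for all}\ 1\leq j\leq \ell\}.
\]
Openness of $S_\ell$ follows from the openness of semistability in flat families \cite[Proposition 2.3.1]{HL10}: pulling back the (framed) universal family of Section 2 by the iterated relative Frobenius gives a flat family over $\mathrm{Bun}^\text{ss}_{(r,d)}$ whose semistable locus at each stage is open, and a finite intersection of these open loci is again open; these loci descend to the coarse space.

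The heart of the argument is that each $S_\ell$ is \emph{dense}, which I would prove by induction on $\ell$. The base case $\ell=1$ is exactly Theorem \ref{Thm:main1}(a). For the inductive step, consider on $S_\ell$ the rational map $E\mapsto (\mathrm{fr}^*)^\ell E$ into $\mathrm{Bun}^\text{ss}_{(r,d_\ell)}$: under the identifications above it is the composite $\mathrm{F}^\mathrm{Bun}_{(r,d_{\ell-1})}\circ\cdots\circ \mathrm{F}^\mathrm{Bun}_{(r,d)}$ of the maps of Theorem \ref{Thm:main1}(b), hence a dominant rational map between irreducible varieties (a composite of dominant rational maps is dominant, since the dense image of the first meets the dense locus of definition of the second). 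Let $D_\ell\subset\mathrm{Bun}^\text{ss}_{(r,d_\ell)}$ be the dense open locus of $F$ with $\mathrm{fr}^*F$ semistable, nonempty by Theorem \ref{Thm:main1}(a). Because a dominant rational map carries the generic point to the generic point, the preimage of $D_\ell$ contains a dense open subset of $S_\ell$; but that preimage is precisely $S_{\ell+1}$. Hence $S_{\ell+1}$ is dense open, completing the induction.

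Finally I would observe that the strongly semistable locus is the countable intersection $\bigcap_{\ell\geq 1}S_\ell$, a countable intersection of dense open subsets of the irreducible variety $\mathrm{Bun}^\text{ss}_{(r,d)}$. Since $k$ is algebraically closed with uncountably many elements, $\mathrm{Bun}^\text{ss}_{(r,d)}$ is not a countable union of proper closed subsets, so this intersection is nonempty (indeed dense); any of its $k$-points is a strongly semistable bundle, which proves the corollary and, as the subsection title asserts, exhibits strong semistability as a very general condition. The step I expect to be the main obstacle is the density claim: one must know that a general semistable bundle pulls back to a \emph{general} point of its moduli space, so that Theorem \ref{Thm:main1}(a) can be reapplied at every stage. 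This is exactly where part (b) is indispensable, as part (a) alone gives no control over where the pullback lands; a secondary technical point is ensuring that each "semistable at stage $j$" locus is genuinely open, which is why I work with the framed families of Section 2 rather than directly on the coarse moduli space.
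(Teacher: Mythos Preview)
Your proposal is correct and follows essentially the same approach as the paper: define the nested open loci $S_\ell$ (the paper writes $U_\ell=(\mathrm F^{\mathrm{Bun},\circ\ell})^{-1}(\mathrm{Bun}^\text{ss}_{(r,p^\ell d)})$, which agrees with your $S_\ell$ since an unstable intermediate pullback would force all later ones to be unstable), observe that the strongly semistable locus is $\bigcap_\ell S_\ell$, and invoke the uncountability of $k$ to conclude this countable intersection of dense opens is nonempty (the paper cites the hint to \cite[V.4, Exercise 4.15(c)]{Hart77}). Your inductive argument for density, making explicit the role of the dominance in Theorem~\ref{Thm:main1}(b) so that part~(a) can be reapplied at each stage, is more carefully spelled out than the paper's one-line appeal to Theorem~\ref{Thm:main1}, but the underlying strategy is identical.
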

\begin{proof} Let $U_\ell=(\mathrm {F}^{\mathrm{Bun},\circ \ell})^{-1}(\mathrm {Bun}^\text{ss}_{(r,p^\ell d)} )$, then by Theorem \ref{Thm:main1} we see that $U_\ell$ is the open subset in $\mathrm {Bun}_{(r,d)}^\text{ss}$ parametrize bundle $E$ such that $(\mathrm {fr}^*)^{\ell}E$ is semistable. Then the set of strongly semistable bundles equals to $\cap_{\ell=1}^\infty U_\ell$. Thus by the Hint in \cite[Chapter V, Section 4, Exercise 4.15 (c)]{Hart77} the intersection of countable open subsets is non-empty.  
\end{proof}

\subsection{Moduli space of {$\lambda$}-connections}

A Higgs bundle on a curve $X$ is a pair $(E,\theta)$, where $E$ is a vector bundle and $\theta:E\to E\otimes \Omega_X^1$ is an $\mathcal O_X$-linear map, which is called a Higgs field. In general, a Higgs field can take values in a coherent sheaf $F$, i.e. $\theta$ can be an $\mathcal O_X$-linear map $E\to E\otimes_{\mathcal O_X}F$.
	In the case that $F=\Omega_X^1$, by Serre duality, a Higgs field $$\theta\in\Gamma(X,\mathcal End_{\mathcal O_X}(E)\otimes \Omega_X^1)\cong \mathrm H^1(X,\mathcal End_{\mathcal O_X}(E))^\vee$$ can be regarded as a cotangent vector in $T^\vee_{\mathcal  N}|_{[E]}$ and the moduli of Higgs bundles can be regarded as the cotangent space of the moduli of vector bundles (cf. \cite{BD}). 
	
	The notion of Higgs bundle was introduced by Hitchin in \cite{Hit87} as the solution to the self-dual Yang-Mills equations. The other ways come from Simpson and Deligne in \cite{Sim87}, in their minds, the concept of a Higgs bundle comes from taking gradings of a variation of Hodge structures. Then Simpson use Higgs bundles to built up a non-abelian verison of Hodge theory in \cite{Sim92}.  The correspondence induces a $C^\infty$-homeomorphism of the moduli spaces of Higgs bundles, flat connections and $\pi_1$-representations. 
	
	N. Nitsure in \cite{N91} constructed the moduli space of (semi-)stable Higgs bundles and we denote our $\Omega_X^1$ valued Higgs bundle moduli by $M_{\text{Dol}, (r,d)}^\text{ss}$ and call it as the Dolbeault moduli after C. Simpson in \cite{Sim92}. If the rank and degree are coprime,  $M_{\text{Dol}, (r,d)}^\text{ss}$ is smooth and quasi-projective. In characteristic zero case, even $r,d$ are not coprime, by Simpson correspondence (cf. \cite[Section 11]{Sim94II}), one can deduce the normality and irreducibility of $M_{\text{Dol}, (r,d)}^\text{ss}$ by passing to the moduli space of $\pi_1$ representations. In the positive characteristic case, we have the following result.
	\begin{proposition}\label{prop:conndol}If the rank $r$ and degree $d$ are not coprime, we assume that $r=r_1q$ and $d=d_1q$ such that $(r_1,d_1)=1$, then the moduli space  $M_{\text {Dol},(r,d)}^\text{ss}$ of $\Omega_X^1$-valued Higgs bundles on a curve $X$ with genus $\geq 2$ is connected. Moreover, the fibers of the Hitchin map $\mathtt h_{\text{Dol},(r,d)}:M_{\text{Dol},(r,d)}^\text{ss}\to A(X)$ with $A(X)=\prod_{i=1}^r\Gamma(X,(\Omega_X^1)^{\otimes i})$ are connected. 
	\end{proposition}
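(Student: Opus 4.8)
The plan is to deduce connectedness of the Dolbeault moduli space $M_{\text{Dol},(r,d)}^\text{ss}$ from the connectedness of the underlying moduli of vector bundles, together with the structure of the Hitchin map. First I would recall that the moduli space $\mathrm{Bun}_{(r,d)}^\text{ss}$ is irreducible — this is already established in the Preliminaries via the surjection from the irreducible quot scheme $R^\text{ss}$ — hence in particular connected. The key geometric observation is that the nilpotent cone, i.e. the fiber $\mathtt h_{\text{Dol},(r,d)}^{-1}(0)$ over the origin of the Hitchin base $A(X)$, contains the zero section $\{(E,0)\}$, which is isomorphic to $\mathrm{Bun}_{(r,d)}^\text{ss}$ and is therefore connected.

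The main strategy is to use the $\mathbb{G}_m$-action on $M_{\text{Dol},(r,d)}^\text{ss}$ given by scaling the Higgs field, $t\cdot(E,\theta)=(E,t\theta)$. This action is compatible with the Hitchin map through a corresponding weighted $\mathbb{G}_m$-action on the base $A(X)=\prod_{i=1}^r\Gamma(X,(\Omega_X^1)^{\otimes i})$, where $t$ acts with weight $i$ on the $i$-th factor. The crucial point is that the limit $\lim_{t\to 0}(E,t\theta)$ exists in the moduli space (properness of the Hitchin map guarantees the limit stays in $M_{\text{Dol},(r,d)}^\text{ss}$, at least after passing to S-equivalence classes) and lands in the central fiber $\mathtt h^{-1}(0)$. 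Consequently every point of $M_{\text{Dol},(r,d)}^\text{ss}$ is connected by the orbit closure of its $\mathbb{G}_m$-flow to a point of the nilpotent cone. Thus $M_{\text{Dol},(r,d)}^\text{ss}$ deformation-retracts (set-theoretically, via the flow) onto $\mathtt h^{-1}(0)$, so connectedness of the whole space follows once we know the central fiber is connected. For the latter I would argue that every semistable Higgs bundle in $\mathtt h^{-1}(0)$ flows, under a further $\mathbb{G}_m$-limit, into the zero section $\mathrm{Bun}_{(r,d)}^\text{ss}$, which we have already noted is connected; hence the nilpotent cone is connected, and therefore so is $M_{\text{Dol},(r,d)}^\text{ss}$.

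For the second assertion, that the Hitchin fibers $\mathtt h_{\text{Dol},(r,d)}^{-1}(a)$ are connected for every $a\in A(X)$, I would again exploit the weighted $\mathbb{G}_m$-action together with the properness of $\mathtt h_{\text{Dol},(r,d)}$. The idea is that since the $\mathbb{G}_m$-action rescales the base, for any $a\neq 0$ one can flow $a$ toward $0$ along the weighted action; the proper family of Hitchin fibers over the corresponding $\mathbb{G}_m$-orbit closure in $A(X)$ then exhibits $\mathtt h^{-1}(a)$ as degenerating into (a piece of) the central fiber. Concretely, I would invoke properness and flatness of the Hitchin map to apply the principle that the number of connected components of fibers is upper semicontinuous, or directly use Stein factorization: if $\mathtt h_{\text{Dol},(r,d)}=g\circ\mathtt h'$ with $\mathtt h'$ having connected fibers and $g$ finite, then connectedness of one fiber combined with the $\mathbb{G}_m$-equivariance forces $g$ to have degree one over the connected base $A(X)$, so all fibers are connected.

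The hard part will be justifying the properness and the existence of $\mathbb{G}_m$-limits in positive characteristic when $r$ and $d$ are not coprime, since in that case $M_{\text{Dol},(r,d)}^\text{ss}$ parametrizes S-equivalence classes and the usual Simpson-correspondence argument available in characteristic zero is unavailable. I expect the decomposition $r=r_1q$, $d=d_1q$ with $(r_1,d_1)=1$ to enter precisely here: one reduces the non-coprime case to the coprime case by relating $M_{\text{Dol},(r,d)}^\text{ss}$ to symmetric products or to the coprime moduli $M_{\text{Dol},(r_1,d_1)}^\text{s}$, controlling the strictly semistable locus via the polystable representatives. Making the $\mathbb{G}_m$-limit argument rigorous at the level of the coarse moduli space (rather than the stack), and ensuring the limits of semistable Higgs bundles remain semistable, is the technical crux; I would handle it using the valuative criterion for the projective Hitchin map and the properness of $M_{\text{Dol},(r,d)}^\text{ss}$ over $A(X)$ established by Nitsure.
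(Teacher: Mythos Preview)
Your overall architecture --- use the $\mathbb{G}_m$-flow to reduce connectedness of $M_{\text{Dol},(r,d)}^\text{ss}$ to connectedness of the nilpotent cone, then reduce that to connectedness of $\mathrm{Bun}_{(r,d)}^\text{ss}$ --- breaks at the second reduction. The claim that ``every semistable Higgs bundle in $\mathtt h^{-1}(0)$ flows, under a further $\mathbb{G}_m$-limit, into the zero section'' is false. For $(E,\theta)$ with $\theta$ nilpotent, the limit $\lim_{t\to 0}(E,t\theta)$ lands in the $\mathbb{G}_m$-\emph{fixed locus}, which consists of systems of Hodge bundles (graded bundles with Higgs field shifting the grading), not just pairs $(E,0)$. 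Moreover, even when that limit makes sense, $(E,0)$ need not lie in the moduli space at all: a Higgs-semistable pair $(E,\theta)$ can have $E$ unstable as a plain bundle. So the nilpotent cone does not retract onto the zero section, and its connectedness is genuinely nontrivial --- indeed, the $\mathbb{G}_m$-fixed locus typically has many components.

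The paper runs the logic in the opposite direction. It first proves that the \emph{stable} locus $M_{\text{Dol},(r,d)}^\text{s}$ is connected by a dimension count on the Hitchin fibration: over the open set of the Hitchin base with integral spectral curves the fibers are compactified Jacobians, hence irreducible, and any extra component would have to sit over the complement, which has codimension $\geq 2$ and hence too small a preimage. Connectedness of $M_{\text{Dol},(r,d)}^\text{ss}$ then follows by induction on rank, using that every strictly semistable point is S-equivalent to a direct sum of stable Higgs bundles of smaller rank, each of which (by induction) can be deformed to the zero section. Only \emph{after} establishing that $M_{\text{Dol},(r,d)}^\text{ss}$ is connected does the paper deduce connectedness of the nilpotent cone, via the hyperbolic-localization cohomology comparison of \cite{FHZ24}; the Stein-factorization / $\mathbb{G}_m$-orbit argument for the remaining fibers is then essentially as you describe. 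In short, your Hitchin-map and $\mathbb{G}_m$-equivariance ingredients are the right ones, but the missing input is an independent argument for connectedness that does not pass through the nilpotent cone --- the spectral-curve description of generic Hitchin fibers plus induction on rank is what supplies it.
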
    
	
	\begin{proof} We first point out that the moduli space of stable Higgs bundles $M_{\text{Dol}, (r,d)}^\text{s}\subset M_{\text{Dol}, (r,d)}^\text{ss}$ is smooth by deformation theory (cf. \cite[Page 3, end of the 2nd paragraph]{Hein15}). We check that   $M_{\text{Dol}, (r,d)}^\text{s}$ is connected by dimension estimate. Let $U_\text{int}\subset A(X)$ be the $\mathrm {codim}\geq 2$ open subset  parameterize  points in the Hitchin base with integral spectral curves and $Z_\text{int}$ be its complement. Then $\mathtt{h}_{\text{Dol}}^{-1}(U_\text{int})$ is irreducible because the fibers are compactified Jacobians of integral curves thus irreducible and the base $U_\text{int}$ is irreducible. Thus the connected components do not intersect   $\mathtt{h}_{\text{Dol}}^{-1}(U_\text{int})$ are contained in $\mathtt{h}^{-1}_{\text{Dol}}(Z_\text{int})$ and have dimension equals to $\dim(M_{\text{Dol}},(r,d))$ if non-empty by the smoothness. But $\mathtt{h}^{-1}_{\text{Dol}}(Z_\text{int})$ is of dimension $\leq \mathrm {dim}(M_{\text{Dol},(r,d)})-2$ because the fibers of the Hitchin map is of constant dimension $\frac{1}{2} \mathrm {dim}(M_{\text{Dol},(r,d)})$ (cf. \cite{Lau88}). So $M_{\text{Dol},(r,d)}^\text{s}$ must be connected.    
		
		 Let us show the connectedness of $M_{\text{Dol}, (r,d)}^\text{ss}$ by induction on the rank $r$. If $r=1$, then $M_{\text{Dol},(1,d)}^\text{ss}\cong \mathrm {Pic}^d_X\times \Gamma(X,\Omega_X^1)$, so $M_{\text{Dol},(1,d)}^\text{ss}$ is connected in this case. Let us assume the connectedness of $M_{\text{Dol}, (r,d)}^\text{ss}$ for $r<r_0$. Consider the connectedness of $M_{\text{Dol}, (r_0,d_0)}^\text{ss}$. Recall that $\mathrm {Bun}^\text{ss}_{(r_0,d_0)}$ is irreducible and normal (cf.\cite[PREMI\`ERE PARTIE, TH\'EOR\`EME 17 ]{Seshadri82}) and it is embedded as a closed subvariety of $M_{\text{Dol},(r_0,d_0)}^\text{ss}$ parameterize those semistable Higgs bundles with zero Higgs fields. Since $\mathrm {Bun}^\text{ss}_{(r_0,d_0)}$ is connected, we just have to check that every semistable Higgs bundle can be deformed to a semistable bundle in $\mathrm {Bun}^\text{ss}_{(r_0,d_0)}$. Let $(E,\theta)$ be a Higgs bundle, if it is stable, then by the connectedness of $M_{\text{Dol},(r_0,d_0)}^\text{s}$, it can deform to a semistable Higgs bundle in  $\mathrm {Bun}^\text{ss}_{(r_0,d_0)}$. If $(E,\theta)$ is strictly semistable, we may assume it is polystable, that is $(E,\theta)\cong \oplus_{i=1}^\ell(E_i,\theta_i)$ with $(E_i,\theta_i)$ are all stable Higgs bundle with slope $\frac{d_0}{r_0}$. Let $r_i=\mathrm {rank}(E_i)$ and $d_i=\mathrm {deg}(E_i)$, since $r_i$'s are strictly less than $r_0$, so by induction, we can deform $(E_i,\theta_i)$ to a semistable vector bundle $E'_i$ with slope $\frac{d_0}{r_0}$. Then take direct sum and we get that $\oplus_{i=1}^\ell(E_i,\theta_i)$ can be deform to a semistable vector bundle $\oplus_{i=1}^\ell(E_i',0)$, so  $M_{\text{Dol},(r,d)}^\text{ss}$ is connected.
		 
		 Then by hyperbolic localization technique in \cite{FHZ24}, $M_{\text{Dol},(r,d)}^\text{ss}$ and its global nilpotent cone have the same cohomology, that the global nilpotent cone of $M_{\text{Dol},(r,d)}^\text{ss}$ is connected. Then by \cite[tag 055H]{stacks-project}, all fibers of $\mathtt{h}_{\text{Dol},(r,d)}$ are connected. This is because for any fiber $\mathtt{h}^{-1}(a)$, there is a curve, which is the orbit $\mathbb A^1\cdot a$. If we restrict the Hitchin map to the curve, by the $\mathbb G_m$ action on $M_{\text{Dol},(r,d)}^\text{ss}$, the zero fiber is the global nilpotent cone and the other fibers are isomorphic to  $\mathtt{h}^{-1}(a)$. 
	\end{proof}

	Deligne and Simpson in \cite{Sim98,Sim10} define the concept of $\lambda$-connections realising the Higgs bundles as a degeneration of vector bundles with flat connections. Here, for $\lambda\in \mathbb C$, a $\lambda$-connection on a vector bundle $E$ consists of an operator $D_\lambda:E\to E\otimes \Omega_X^1$ such that $D_\lambda(fe) = \lambda e\otimes \mathrm df  + fD_\lambda(e)$ (Leibniz rule multiplied by $\lambda$) and such that $D_\lambda^2 = 0$(integrability) as defined in the usual way. Note that if $\lambda = 1$ then this is the same as the usual notion of a flat connection, whereas if $\lambda = 0$ then this is the same as the notion of Higgs field making $(E, D_0)$	into a Higgs bundle. Moreover, Simpson in \cite{Sim98} further introduced the moduli space of all $\lambda$-connections for $\lambda\in \mathbb A^1_{\mathbb C}$ which he called Hodge moduli space and denote it by $M_\text{Hdg}$ and regard the map $M_\text{Hdg}\to \mathbb A^1,\ {D_\lambda}\mapsto \lambda$ as the Hodge filtration on $M_\text{dR}$. 
	
	In positive characteristic, the moduli of $\lambda$-connections are also studied by \cite{LP01,Lan14,CZ15,Groechenig16,Lan22,dCZ22A,dCZ22P,FZ23,dCGZ24,dCFHZ24}. We refer the readers to these papers and the references their in. We point out here that unlike the characteristic zero case, a vector bundle with non-zero degree may admit a flat connection. According to \cite{Atiyah57} (see also \cite{BS06}), a vector bundle $E$ admit a connection if and only if its Atiyah class $\mathrm {At}(E)\in \mathrm {H}^1(X,\mathcal End(E))$ vanish. This implies that in characteristic $p>0$ case, if $E$ admit a connection, then its degree must be divided by $p$. In this case, we have shown that.
	
	\begin{corollary} If the rank $r$ and degree $d$ are not coprime, we assume that $r=r_1q$ and $d=d_1q$ such that $(r_1,d_1)=1$, then the moduli space  $M_{\text {dR},(r,d)}^\text{ss}$ of flat connections on a curve $X$ with genus $\geq 2$ is connected. Moreover, the fibers of the Hodge-Hitchin map (cf. \cite{LP01,dCZ22P,Lan22}) $\mathtt h_{\text{Hdg},(r,pd)}:M_{\text{Hdg},(r,pd)}^\text{ss}\to A(X^{(1)})\times \mathbb A^1$ are connected. 
	\end{corollary}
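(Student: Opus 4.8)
The plan is to transport the argument of Proposition \ref{prop:conndol} to the Hodge moduli space, using the degeneration of $\lambda$-connections to Higgs bundles. The space $M_{\text{Hdg},(r,pd)}^\text{ss}$ carries the $\mathbb G_m$-action $t\cdot(E,D_\lambda,\lambda)=(E,tD_\lambda,t\lambda)$, which is an isomorphism for $t\neq 0$ and whose limit as $t\to 0$ lands in the central fibre $\lambda=0$, that is, in $M_{\text{Dol},(r,pd)}^\text{ss}$. Under the Hodge--Hitchin map this action is equivariant for a linear $\mathbb G_m$-action on $A(X^{(1)})\times\mathbb A^1$ with strictly positive weights (the coordinate $\lambda$ has weight one and each component of $A(X^{(1)})$, being polynomial in the $p$-curvature, has positive weight), so the whole base is contracted to the origin $(0,0)$. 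The first thing to pin down is that $\mathtt h_{\text{Hdg},(r,pd)}^{-1}(0,0)$ is exactly the Dolbeault global nilpotent cone: at $\lambda=0$ the Hodge--Hitchin invariants reduce, through the Frobenius-twist identification of $A(X)$ with $A(X^{(1)})$, to the characteristic polynomial of the Higgs field, so the central fibre equals $\mathtt h_{\text{Dol},(r,pd)}^{-1}(0)$, which is connected by Proposition \ref{prop:conndol}.

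Next I would use that $\mathtt h_{\text{Hdg},(r,pd)}$ is proper by \cite{LP01,dCZ22P,Lan22}. For any $(a,\lambda)$ the orbit-closure $\overline{\mathbb G_m\cdot(a,\lambda)}$ is an affine line through $(0,0)$, and restricting the Hodge--Hitchin map over this line produces a proper family whose special fibre is the connected nilpotent cone and whose remaining fibres are all isomorphic to $\mathtt h_{\text{Hdg},(r,pd)}^{-1}(a,\lambda)$. Exactly as in the last paragraph of Proposition \ref{prop:conndol}, the connectedness of the special fibre then propagates to every fibre by \cite[tag 055H]{stacks-project}, which proves that all fibres of $\mathtt h_{\text{Hdg},(r,pd)}$ are connected.

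Finally I would obtain the connectedness of $M_{\text{dR},(r,d)}^\text{ss}$ by running the identical formalism for the Hodge moduli space in degree $d$ and specialising to the slice $\lambda=1$. There the Hodge--Hitchin map becomes the de Rham ($p$-curvature) Hitchin map $\mathtt h_{\text{dR}}\colon M_{\text{dR},(r,d)}^\text{ss}\to A(X^{(1)})$, which is proper with connected fibres by the previous step and surjective by \cite{LP01,dCZ22P} (its $p$-curvature-zero fibre being non-empty by Corollary \ref{coro:main1}). Since a proper, hence closed, surjective morphism with connected fibres over the connected affine base $A(X^{(1)})$ forces the source to be connected, $M_{\text{dR},(r,d)}^\text{ss}$ is connected. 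I expect the main obstacle to be the identification of the central fibre $\mathtt h_{\text{Hdg}}^{-1}(0,0)$ with the Dolbeault nilpotent cone, which requires controlling the $\lambda\to 0$ degeneration of the $p$-curvature and its compatibility with the Frobenius-twisted Hitchin base on $X^{(1)}$; once this and the properness and surjectivity inputs of \cite{LP01,dCZ22P,Lan22} are granted, the remaining steps are formal consequences of the $\mathbb G_m$-equivariance together with \cite[tag 055H]{stacks-project}, paralleling Proposition \ref{prop:conndol}.
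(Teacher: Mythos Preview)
Your approach is correct and closely parallels the paper's, but the key technical input differs. The paper's one-line proof invokes the very good splitting theorem \cite[Corollary 4.14]{dCGZ24}, which provides (after a choice of splitting) an identification of $M_{\text{Hdg}}$ with $M_{\text{Dol}}\times\mathbb A^1$ compatibly with the Hodge--Hitchin and Dolbeault--Hitchin maps; this makes the transfer of fibrewise connectedness from Proposition~\ref{prop:conndol} essentially immediate, and the $\mathbb G_m$-action is then only used to pass between $\lambda$-values. You instead bypass the splitting theorem entirely and run the limiting argument of Proposition~\ref{prop:conndol} directly on the Hodge side, relying on the properness of $\mathtt h_{\text{Hdg}}$ from \cite{LP01,dCZ22P,Lan22} together with \cite[tag 055H]{stacks-project}. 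Your route is more self-contained (it needs only properness, not the structural isomorphism of \cite{dCGZ24}), while the paper's route is shorter once that splitting result is available and avoids having to analyse the restriction of the Hodge--Hitchin map to $\lambda=0$.

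One small caution: your sentence that at $\lambda=0$ the Hodge--Hitchin invariants ``reduce, through the Frobenius-twist identification of $A(X)$ with $A(X^{(1)})$, to the characteristic polynomial of the Higgs field'' is slightly imprecise, since the $p$-curvature of a Higgs field $\theta$ involves $\theta^p$ rather than $\theta$ itself (compare the remark before the final corollary about $(\mathtt h^{\psi\le 1}_{\text{Nilp,Hdg}})^{-1}(0)$). However, your conclusion is unaffected: the vanishing of the characteristic polynomial of $\theta^p$ is equivalent to the nilpotence of $\theta$, so $\mathtt h_{\text{Hdg}}^{-1}(0,0)$ is indeed the Dolbeault global nilpotent cone, and the rest of your argument goes through.
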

	\begin{proof} By the very good splitting theorem \cite[Corollary 4.14]{dCGZ24}, the $\mathbb G_m$ action on $\mathtt{h}_{\text{Hdg}}$ together with Proposition \ref{prop:conndol}, we could get the desired result.   
	\end{proof}
	If we restrict the Hodge-Hitchin map to the loci of nilpotent $p$-curvatures, we have
	$\mathtt h_{\text{Nilp,Hdg}}:\mathrm{Nilp}_{\text{Hdg},(r,pd)}^\text{ss}\to\mathbb A^1$ and $\mathtt h_{\text{Nilp,Hdg}}^{\psi \leq \ell}:\mathrm{Nilp}_{\text{Hdg},(r,pd)}^{\text{ss},\psi\leq \ell}\to\mathbb A^1$ the loci of $p$-curvature nilpotence exponent $\leq \ell$. In particular, if $\ell =1$, as pointed out by Langer in \cite[Page 531]{Lan14}, $(\mathtt{h}_{\text{Nilp,Hdg}}^{\psi\leq 1})^{-1}(0)\cong [\mathtt{h}_{\text{Dol}}^{-1}(0)]^{\theta\leq p}$. That is, $(\mathtt{h}_{\text{Nilp,Hdg}}^{\psi\leq 1})^{-1}(0)$ is not the loci with $\theta=0$, but the loci with $\theta^p=0$. By our main theorem, we have.
	\begin{corollary} There is an irreducible closed subset $N$ in $\mathrm{Nilp}_{\text{Hdg},(r,pd)}^{\text{ss},\psi\leq 1}$, such that $\mathtt{h}_\text{Nilp,Hdg} (N)=\mathbb A^1$ and $\mathrm {Bun}^\text{ss}_{(r,d)} \cong[\mathtt{h}_{\text{Dol}}^{-1}(0)]^{\theta=0}\subset N$. 
	\end{corollary}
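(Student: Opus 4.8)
The plan is to realize $N$ as the closure of one explicit $\mathbb{A}^1$-family of $\lambda$-connections built from Frobenius-pullback bundles, and to feed the density supplied by Corollary \ref{coro:main1} into it so that the closed family swallows the whole zero section. First I would invoke Cartier's theorem: for a vector bundle $G'$ on $X^{(1)}$, the Frobenius pullback $\mathrm{fr}^*G'$ carries a canonical integrable connection $\nabla^{\mathrm{can}}$ with vanishing $p$-curvature, i.e. a flat connection, which is a $\lambda$-connection with $\lambda=1$. Working in families over $\mathrm{Bun}_{X^{(1)},(r,d)}^\text{ss}$, the assignment $G'\mapsto(\mathrm{fr}^*G',\nabla^{\mathrm{can}})$ is a morphism on the open locus $U$ where $\mathrm{fr}^*G'$ is semistable, and $U$ is non-empty and open by Theorem \ref{Thm:main1} (Lemmas \ref{lem:1rdcoprime} and \ref{lem:nocoprime}).

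Next I would spread this family across the Hodge line using the scaling $\mathbb{G}_m$-action on $M_\text{Hdg}$, defining the rational map
$$
\Phi:\ \mathbb{A}^1\times \mathrm{Bun}_{X^{(1)},(r,d)}^\text{ss}\ \dashrightarrow\ M_{\text{Hdg},(r,pd)}^\text{ss},\qquad (\lambda,G')\ \longmapsto\ (\mathrm{fr}^*G',\ \lambda\,\nabla^{\mathrm{can}}),
$$
defined on $\mathbb{A}^1\times U$. The underlying bundle $\mathrm{fr}^*G'$ is constant along the family, so its semistability forces semistability of each $\lambda$-connection $(\mathrm{fr}^*G',\lambda\nabla^{\mathrm{can}})$ (every $\lambda\nabla^{\mathrm{can}}$-invariant subbundle is in particular a subbundle), while the $p$-curvature of $\lambda\nabla^{\mathrm{can}}$ is a scalar multiple of that of $\nabla^{\mathrm{can}}$ and hence vanishes; thus $\mathrm{Im}\,\Phi\subset \mathrm{Nilp}_{\text{Hdg},(r,pd)}^{\text{ss},\psi\leq 1}$. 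I then set $N:=\overline{\mathrm{Im}\,\Phi}$, the closure taken inside the closed stratum $\mathrm{Nilp}_{\text{Hdg},(r,pd)}^{\text{ss},\psi\leq 1}$. Since $\mathbb{A}^1$ is irreducible and $\mathrm{Bun}_{X^{(1)},(r,d)}^\text{ss}$ is irreducible (Seshadri), both the source and $N$ are irreducible, and $N$ is closed by construction.

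It then remains to check the two asserted properties. The composite $\mathtt{h}_{\text{Nilp,Hdg}}\circ\Phi$ is projection to the $\lambda$-coordinate, which already surjects onto $\mathbb{A}^1$ (for any fixed $\lambda$, any $G'\in U$ supplies a point over it), so $\mathtt{h}_{\text{Nilp,Hdg}}(N)=\mathbb{A}^1$. For the containment, the central fiber $\Phi(\{0\}\times U)$ is precisely the set $\{(\mathrm{fr}^*G',0)\}$ of Frobenius-pullback bundles carrying the trivial Higgs field; by Corollary \ref{coro:main1} these are dense in the zero section $[\mathtt{h}_{\text{Dol}}^{-1}(0)]^{\theta=0}$, i.e. in the locus of semistable bundles of degree $pd$ with zero Higgs field, which is irreducible. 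As $N$ is closed it contains the closure of this dense subset, namely the entire zero section, giving $[\mathtt{h}_{\text{Dol}}^{-1}(0)]^{\theta=0}\subset N$ as required.

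The delicate step is the analysis at $\lambda=0$: I must verify that the $\mathbb{G}_m$-degeneration of each flat connection $(\mathrm{fr}^*G',\nabla^{\mathrm{can}})$ limits to the honest bundle $(\mathrm{fr}^*G',0)$ inside the \emph{semistable} Hodge moduli (rather than leaving the semistable locus or jumping to a different associated-graded object), and that the vanishing $p$-curvature condition is preserved in the limit so that the limit genuinely lies in $[\mathtt{h}_{\text{Dol}}^{-1}(0)]^{\theta\leq p}$; this is where the compatibility of the $\psi\leq 1$ stratification with the $\mathbb{G}_m$-action and Langer's identification of the central fiber (recalled just before the statement) enter. The substantive input, however, is Corollary \ref{coro:main1}: it is exactly the density of Frobenius pullbacks in the degree-$pd$ semistable locus that allows the irreducible closed set $N$ to absorb the whole zero section, so that without the dominance of Theorem \ref{Thm:main1} the argument would collapse.
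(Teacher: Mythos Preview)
Your proof is correct and follows the same route as the paper: build the $\mathbb A^1$-family $(\mathrm{fr}^*G',\,\lambda\nabla^{\mathrm{can}})$ from Cartier's canonical connection on Frobenius pullbacks, take its closure, and use the dominance of Corollary~\ref{coro:main1} to absorb the whole zero section $\mathrm{Bun}^{\text{ss}}$. The only differences are cosmetic: the paper parametrizes by an open $U_{\text{vst,f}}\subset \mathrm{Bun}^{\text{s}}_{X,(r,pd)}$ on the target side (further cut down to very stable bundles via \cite[Proposition~3.5]{Lau88}, which it uses to phrase the map as an immersion) rather than by the Frobenius source as you do, and your ``delicate step'' at $\lambda=0$ is not actually delicate---since $\mathrm{fr}^*G'$ is semistable for $G'\in U$, the pair $(\mathrm{fr}^*G',0)$ is already a semistable Higgs bundle with vanishing $p$-curvature, so $\Phi$ is honestly defined on all of $\mathbb A^1\times U$ and no limit needs to be analyzed.
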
	 
\begin{proof} By Theorem \ref{Thm:main1} and \cite[Proposition 3.5]{Lau88}, we can pick an open subset $U_\text{vst,f}\subset \mathrm {Bun}^\text{s}_{(r,pd)}$ such that for any $[F]\in U_\text{vst,f}$, $F$ is very stable and there is a stable bundle $E$ such that $F\cong \mathrm {fr}^*E$. Thus on $X\times U_\text{vst,f}\times \mathbb A^1$, we have a family of $\lambda$-connections defined by $(F,\lambda\nabla^\text{can}_F)$. This $\lambda$ connection defines an immersion $U_\text{vst,f}\times \mathbb A^1\to  \mathrm{Nilp}_{\text{Hdg},(r,pd)}^{\text{ss},\psi\leq 1}$ over $\mathbb A^1$. Take $N$ to be the closure of $U_\text{vst,f}\times \mathbb A^1$. 
\end{proof}

\subsection{Frobenius stratification} The Harder-Narasimhan filtration type of Frobenius pull backs of semistable vector bundles defines a stratification on $\mathrm {Bun}^\text{ss}_{(r,d)}$, which is called the Frobenius stratification. For the basic properties of the Frobenius stratification, we refer the readers to \cite{LP02,JRXY06,LangePauly08,Ducrohet09,Li14,Li19B,Li19T,Li20,LiZhang24} and the references therein. By Theorem \ref{Thm:main1}, we have:
\begin{corollary} The open strata of the minimal polygon, that is $U\subset \mathrm {Bun}_{X,(r,d)}^\text{ss}$ parameterize semistable bundle $E$ such that $\mathrm {fr}^*E$ is semistable is a non-empty subset of $\mathrm {Bun}_{X,(r,d)}^\text{ss}$. 
\end{corollary}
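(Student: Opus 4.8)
The plan is to recognize the statement as the combination of two ingredients that are already in hand: the \emph{openness} of the minimal-polygon stratum, which is a formal property of Harder--Narasimhan stratifications in flat families, and its \emph{non-emptiness}, which is exactly the content of Theorem~\ref{Thm:main1}(a).

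First I would fix the family that carries the stratification. Working on the source $\mathrm{Bun}^{\mathrm{ss}}_{X^{(1)},(r,d)}$ of the rational map $\mathrm F^{\mathrm{Bun}}_{(r,d)}$, let $R^{\mathrm{ss}}$ be the open locus of the relevant Quot scheme parameterizing semistable bundles of rank $r$ and degree $d$ on $X^{(1)}$, with universal quotient $\mathcal E^{\mathrm{univ}}$ on $X^{(1)}\times R^{\mathrm{ss}}$ and the $\mathrm{GL}$-action whose good quotient is $\mathrm{Bun}^{\mathrm{ss}}_{X^{(1)},(r,d)}$. Pulling back by the relative Frobenius I form the flat family $(\mathrm{fr}\times\mathrm{id}_{R^{\mathrm{ss}}})^{*}\mathcal E^{\mathrm{univ}}$ of rank-$r$, degree-$pd$ bundles on $X\times R^{\mathrm{ss}}$. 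By definition the Frobenius stratification is the stratification of $R^{\mathrm{ss}}$ (and of its quotient) by the Harder--Narasimhan type of the fibres of this family, and the minimal polygon is the straight segment from $(0,0)$ to $(r,pd)$, i.e.\ the type $\tau_0=(r,pd)$, which occurs precisely when the fibre $\mathrm{fr}^{*}E$ is semistable.

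Next I would establish openness. By Shatz's upper semicontinuity of the Harder--Narasimhan polygon in flat families (cf.\ \cite[Lemma 7]{Sha77}, already invoked in the proof of Lemma~\ref{lem:1rdcoprime}; equivalently the openness of semistability \cite[Proposition 2.3.1]{HL10}), the locus $\widetilde U\subset R^{\mathrm{ss}}$ on which the fibre of $(\mathrm{fr}\times\mathrm{id})^{*}\mathcal E^{\mathrm{univ}}$ attains the minimal polygon is open. This locus is invariant under the $\mathrm{GL}$-action, hence descends to an open subset $U$ of the moduli space, which is exactly the minimal-polygon stratum, namely the set of $E$ with $\mathrm{fr}^{*}E$ semistable.

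Finally, non-emptiness is immediate: Theorem~\ref{Thm:main1}(a) asserts that a general semistable $E$ has $\mathrm{fr}^{*}E$ semistable, so $\widetilde U\neq\emptyset$ and therefore $U\neq\emptyset$. I expect the openness step to be purely formal, so the only substantive input --- and the reason this is a corollary rather than a tautology --- is the non-emptiness, where the dimension estimates behind Lemma~\ref{lem:1rdcoprime} are the real work. Accordingly I do not anticipate any genuine obstacle beyond correctly transporting Theorem~\ref{Thm:main1}(a) through the quotient $R^{\mathrm{ss}}\to\mathrm{Bun}^{\mathrm{ss}}$.
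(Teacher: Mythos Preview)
Your proposal is correct and matches the paper's approach: the paper records this corollary with no proof beyond the phrase ``By Theorem~\ref{Thm:main1}, we have'', so the only content is the non-emptiness supplied by Theorem~\ref{Thm:main1}(a), and you have identified this precisely while also spelling out the routine openness step (via \cite{Sha77} or \cite[Proposition~2.3.1]{HL10}) that the paper leaves implicit.
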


\vspace{5pt}
\noindent \textbf{Acknowledgments}. The author Jin Cao was supported by the University of Science and Technology Beijing Foundation, China (Grant No. 00007886) and the Fundamental Research Funds for the Central Universities (Grant No. 06320202 and Grant No. FRF-BRB-25-007). The author Xiaoyu Su was supported by National Natural Science Foundation of China (Grant No. 12301056) and the Fundamental Research Funds for the Central Universities (Grant No. 510224016).

The authors would like to thank Prof. Yi Gu for very useful discussions. The author Xiaoyu Su would like to thank Prof. Jianyong Qiao and Prof. Yuming Zhong for their generous help.
\vspace{5pt}

\end{document}